\title[Triply periodic zero mean curvature surfaces]{
  Embedded triply periodic zero mean curvature surfaces of mixed type in Lorentz-Minkowski 3-space}
\date{February 19, 2013}
\theoremstyle{plain}
 \newtheorem{theorem}{Theorem}[section]
 \newtheorem*{theorem*}{Theorem}
 \newtheorem{introtheorem}{Theorem}
 \newtheorem*{lemma*}{Lemma}
 \newtheorem{proposition}[theorem]{Proposition}
 \newtheorem{fact}[theorem]{Fact}
 \newtheorem*{fact*}{Fact}
 \newtheorem{lemma}[theorem]{Lemma}
 \theoremstyle{remark}
 \newtheorem{remark}[theorem]{Remark}
 \newtheorem*{remark*}{Remark}
 \newtheorem*{problem*}{Problem}
\numberwithin{equation}{section}
\numberwithin{figure}{section}
\newcommand{\Z}{\boldsymbol{Z}}
\newcommand{\R}{\boldsymbol{R}}
\newcommand{\C}{\boldsymbol{C}}
\renewcommand{\Re}{\operatorname{Re}}
\renewcommand{\Im}{\operatorname{Im}}
\renewcommand{\phi}{\varphi}
\renewcommand{\epsilon}{\varepsilon}
\author{S.~Fujimori}
\address[Shoichi Fujimori]{%
   Department of Mathematics, Okayama University,
   Tsushima-naka, Okayama 700-8530, Japan}
\email{fujimori@math.okayama-u.ac.jp}
\author{W.~Rossman}
\address[Wayne Rossman]{%
   Department of Mathematics, Faculty of Science,
   Kobe University,
   Rokko, Kobe 657-8501, Japan
}
\email{wayne@math.kobe-u.ac.jp}
\author{M.~Umehara}
\address[Masaaki Umehara]{%
   Department of Mathematical and Computing Sciences,
   Tokyo Institute of Technology,
   2-12-1-W8-34, O-okayama, Meguro-ku,
   Tokyo 152-8552, Japan.
}
\email{umehara@is.titech.ac.jp}
\author{K.~Yamada}
\address[Kotaro Yamada]{%
   Department of Mathematics, 
   Tokyo Institute of Technology, 1-12-1-H-7, 
   O-okayama, Meguro, Tokyo 152-8551, 
   Japan
}
\email{kotaro@math.titech.ac.jp}
\author{S.-D.~Yang}
\address[Seong-Deog Yang]{%
   Department of Mathematics,
   Korea University,
   Seoul 136-701, Korea
}
\email{sdyang@korea.ac.kr}
\subjclass[2000]{Primary 53A10; Secondary 53A35, 53C50.}
\thanks{
Fujimori was partially supported by the Grant-in-Aid for Young Scientists (B) No. 21740052, Rossman was supported by Grant-in-Aid for Scientific Research (B) No. 20340012, Umehara by (A) No. 22244006 and Yamada by (B) No. 21340016 from Japan Society for the Promotion of Science. 
Yang was supported in part by National Research Foundation of Korea 2012-042530.
}
\begin{document}
\begin{abstract}
We construct embedded triply periodic zero mean curvature surfaces of mixed type 
in the Lorentz-Minkowski 3-space $\R^3_1$ with the same topology 
as the Schwarz D surface in the Euclidean 3-space $\R^3$. 
\end{abstract}
\maketitle

\section{Introduction}
In any robust surface theory, it is essential to have a large 
collection of interesting examples.
One of the interesting classes of surfaces to study are the zero mean curvature surfaces of mixed type
in Lorentz-Minkowski three-space $\R^3_1$, which, roughly speaking, are smooth surfaces of mixed causal type with mean curvature, wherever it is well defined, equal to zero.   

Several authors have found such examples \cite{K},
\cite{G}, \cite{ST}, \cite{Kl}, \cite{CR2},
all of which have simple topology. The main goal of this article is to provide a concrete example of a family of such surfaces with nontrivial topology.

The motivation for the method of our construction is the fact that fold singularities of spacelike maximal surfaces have  real analytical extensions to 
timelike minimal surfaces (cf. \cite{G}, \cite{Kl}, \cite{KKSY},
\cite{CR2}). 
Main ingredients are the spacelike maximal analogues in $\R^3_1$ of the Schwarz P surfaces and the Schwarz D surfaces in $\R^3$, which were remarked upon in a previous work \cite{FRUYY} by the authors.  The Schwarz P-type maximal surfaces admit
cone-like singularities while the Schwarz D-type maximal surfaces 
admit fold singularities (cf. Figure \ref{fig:surface_PD}). 
By extending the Schwarz D-type (spacelike) maximal surfaces to timelike minimal surfaces, we obtain the following main result of this article:

\begin{introtheorem}\label{th:A}
The $1$-parameter family of
Schwarz D-type spacelike maximal surfaces $\{X_a\}_{0<a<1}$
has a unique analytic extension
$$
\tilde X_a:\Sigma_a \to \R^3_1/\Gamma_a \qquad (0<a<1)
$$
to embedded zero mean curvature surfaces,
where 
$\R^3_1/\Gamma_a$ is a torus
given by a suitable 3-dimensional lattice $\Gamma_a$,
and $\Sigma_a$ is a closed orientable $2$-manifold
of genus three (cf. Figure~\ref{fig:mixface}).
\end{introtheorem}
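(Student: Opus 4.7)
The plan is to treat $X_a$ via its Weierstrass (Enneper–Weierstrass) data and to extend it across its fold singularities by appealing to the real-analytic extension principle for folds that the introduction already invokes (cf.\ \cite{G}, \cite{Kl}, \cite{KKSY}, \cite{CR2}). First I would recall from \cite{FRUYY} the explicit Weierstrass pair $(g,\omega)$ describing the Schwarz D-type spacelike maximal surface $X_a$, and identify the locus $L_a=\{|g|=1\}$ where the induced metric degenerates. Because, for Schwarz D-type, these singularities are of fold type, on a neighborhood of $L_a$ one may reparametrize by split-complex (para-complex) coordinates and obtain a Weierstrass-type representation of a timelike minimal surface, which matches $X_a$ real-analytically across $L_a$. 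This produces the extension $\tilde X_a$ defined on an abstract $2$-manifold $\Sigma_a$ obtained from the spacelike and timelike fundamental pieces glued along their fold curves.

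Next I would establish periodicity. One chooses a fundamental domain $D_a\subset\Sigma_a$ built from one spacelike piece together with its paired timelike pieces, exactly mirroring the fundamental piece decomposition of the classical Schwarz D surface. Using the $180^\circ$ rotational symmetries about the straight lines (both spacelike and timelike) that $\tilde X_a$ contains on its boundary edges — these come from the Schwarz reflection principle, which holds separately in the spacelike and timelike settings — one propagates $D_a$ to a complete surface invariant under translations, the translations being given by pairs of successive rotations. This produces the three generators of the lattice $\Gamma_a$; I would then read off $\Gamma_a$ explicitly from the period integrals as functions of $a$, and verify that for every $a\in(0,1)$ these three translation vectors are linearly independent, so $\R^3_1/\Gamma_a$ is genuinely a $3$-torus. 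Uniqueness of the extension is automatic from real-analyticity.

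To compute the topology I would count the images of $D_a$ in the fundamental domain of $\Gamma_a$ and apply the usual Euler-characteristic count as for the Schwarz D surface: the identifications force $\chi(\Sigma_a)=-4$, giving genus three.

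The main obstacle is embeddedness, since self-intersections are not ruled out by the Weierstrass description and the mixed causal type precludes the usual convex-hull or maximum-principle arguments available in the Riemannian case. My plan here is to leverage symmetry: slice $\R^3_1/\Gamma_a$ by the fixed-point sets of the ambient isometries used to generate $\tilde X_a$, and show that a single fundamental piece of $\tilde X_a$ is a graph (or at worst a disk transverse to a coordinate axis) over a planar region bounded by its special curves. Embeddedness of the whole quotient surface then follows from the symmetry group action, together with a careful check that the spacelike and timelike pieces meet only along their common fold curves $L_a$ and nowhere else. Verifying this graph property — likely through a monotonicity argument on one of the Weierstrass coordinates along the fundamental piece, parameterized by $a$ — is the technical heart of the argument, and must be shown to hold for the entire parameter range $0<a<1$.
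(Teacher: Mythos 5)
Your overall architecture matches the paper's: extend across the fold locus to a timelike minimal piece, use the straight lines on the boundary and the Lorentzian reflection principles to propagate a fundamental piece into a triply periodic surface, read off the lattice, and reduce everything to embeddedness of one fundamental piece. However, the step you yourself identify as the technical heart --- embeddedness --- is where your proposed mechanism would not go through, and the paper's actual mechanism is absent from your plan. The spacelike piece $\Omega_a^{\text{max}}$ is indeed a graph (its projection to the $x_1x_2$-plane coincides with that of the Schwarz P minimal surface), and, contrary to your remark that the maximum principle is unavailable, the paper uses exactly the maximum principle to confine it to the prism $\overline{\Delta\times\mathcal{L}_C^{\text{min}}}$. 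The timelike piece is another matter: it is \emph{not} treated as a graph over a planar region, and a ``monotonicity argument on one of the Weierstrass coordinates'' has no evident traction there, because the timelike extension is not given by Weierstrass data at all but by the explicit chord--midpoint formula
$$
\tilde f_a(u,v)=\tfrac12\bigl(\gamma_a(u+v)+\gamma_a(u-v)\bigr),
$$
where $\gamma_a$ is the lightlike image of the fold set. Your ``split-complex Weierstrass representation'' is a legitimate way to produce \emph{some} timelike extension, but it does not hand you the geometric structure that the embeddedness proof actually exploits.

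That structure is twofold. First, after reparametrizing by height one finds that the $x_0$-coordinate of the timelike piece \emph{is} one of the parameters ($x_0\circ\check f_a(\alpha,\beta)=\alpha$, Remark~\ref{re:height-alpha}), so two self-intersecting points must share the parameter $\alpha$. Second --- and this is the key lemma your proposal is missing --- the projection of the fold curve $\gamma_a$ into the $x_1x_2$-plane is a closed \emph{convex} curve (Lemma~\ref{lm:convex}, via the computation $\kappa_a=\sqrt{\xi_a}>0$). Since each point of the timelike piece is the midpoint of a chord of this convex curve, two nested chords centered at the same parameter $\alpha$ cannot have the same midpoint, which forces $\beta=\beta'$ and gives injectivity; the same convexity shows the timelike piece projects strictly inside the curve, hence meets the closure of the spacelike piece only along the fold, which is the ``careful check'' you defer. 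Without the convexity of $\pi_0\circ\gamma_a$ and the height reparametrization, your plan for Proposition~\ref{pr:key-prop} has no working engine, so as written the proposal has a genuine gap precisely at the step on which the theorem rests.
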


\begin{figure}[thbp] 
\begin{center}
\begin{tabular}{c@{\hspace{3em}}c}
 \includegraphics[width=.30\linewidth]{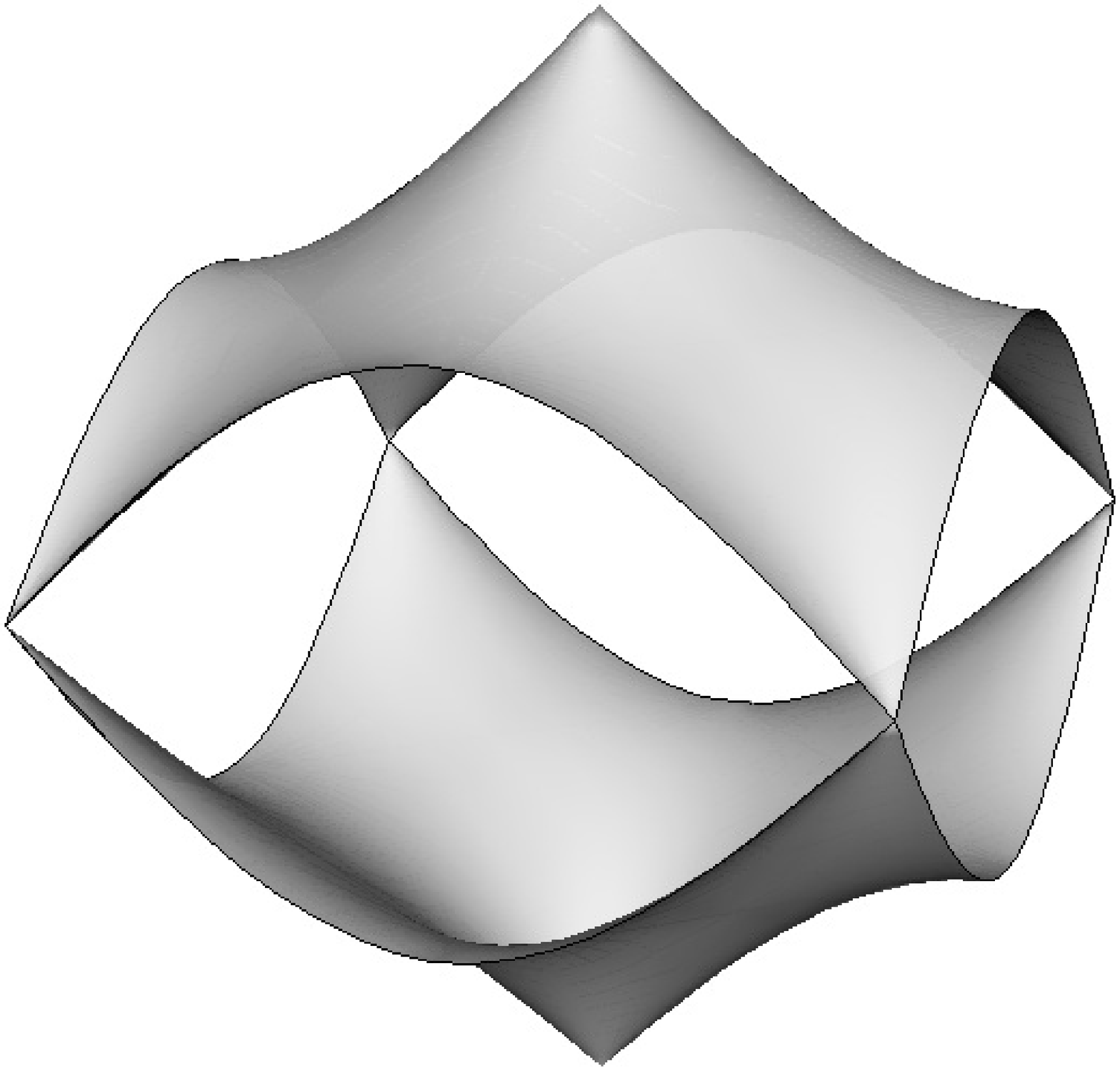} &
 \includegraphics[width=.30\linewidth]{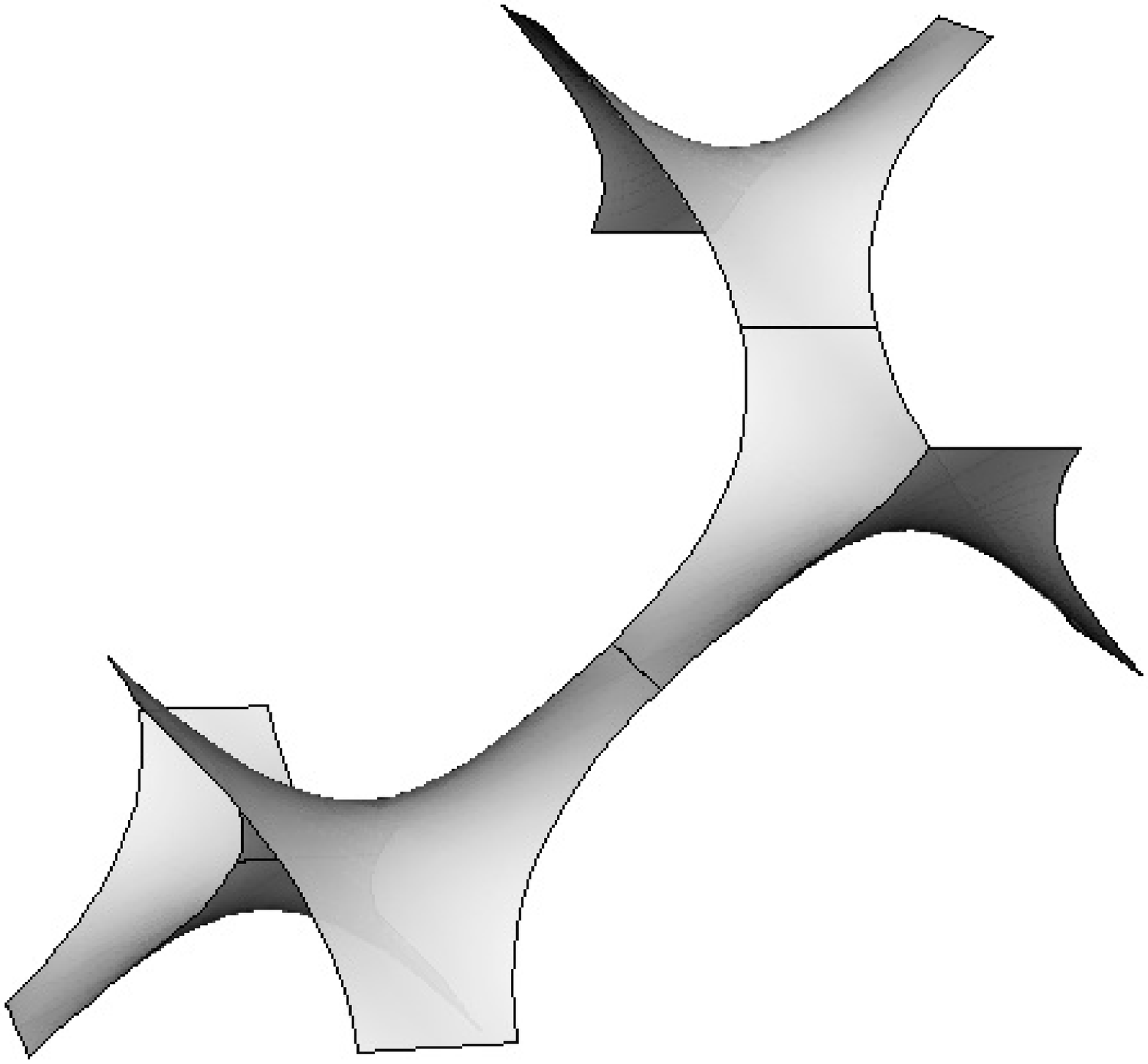} \\
\end{tabular}
\end{center}
\caption{Schwarz P-type (left) and D-type maximal surfaces (right).}
\label{fig:surface_PD}
\end{figure} 

In so doing we provide a concrete description of the family of triply periodic maximal surfaces containing the Schwarz P-type and D-type maximal surfaces.

\begin{figure}[htbp] 
\begin{center}
\begin{tabular}{ccc}
 	\scalebox{0.6}{\includegraphics[width=.54\linewidth]{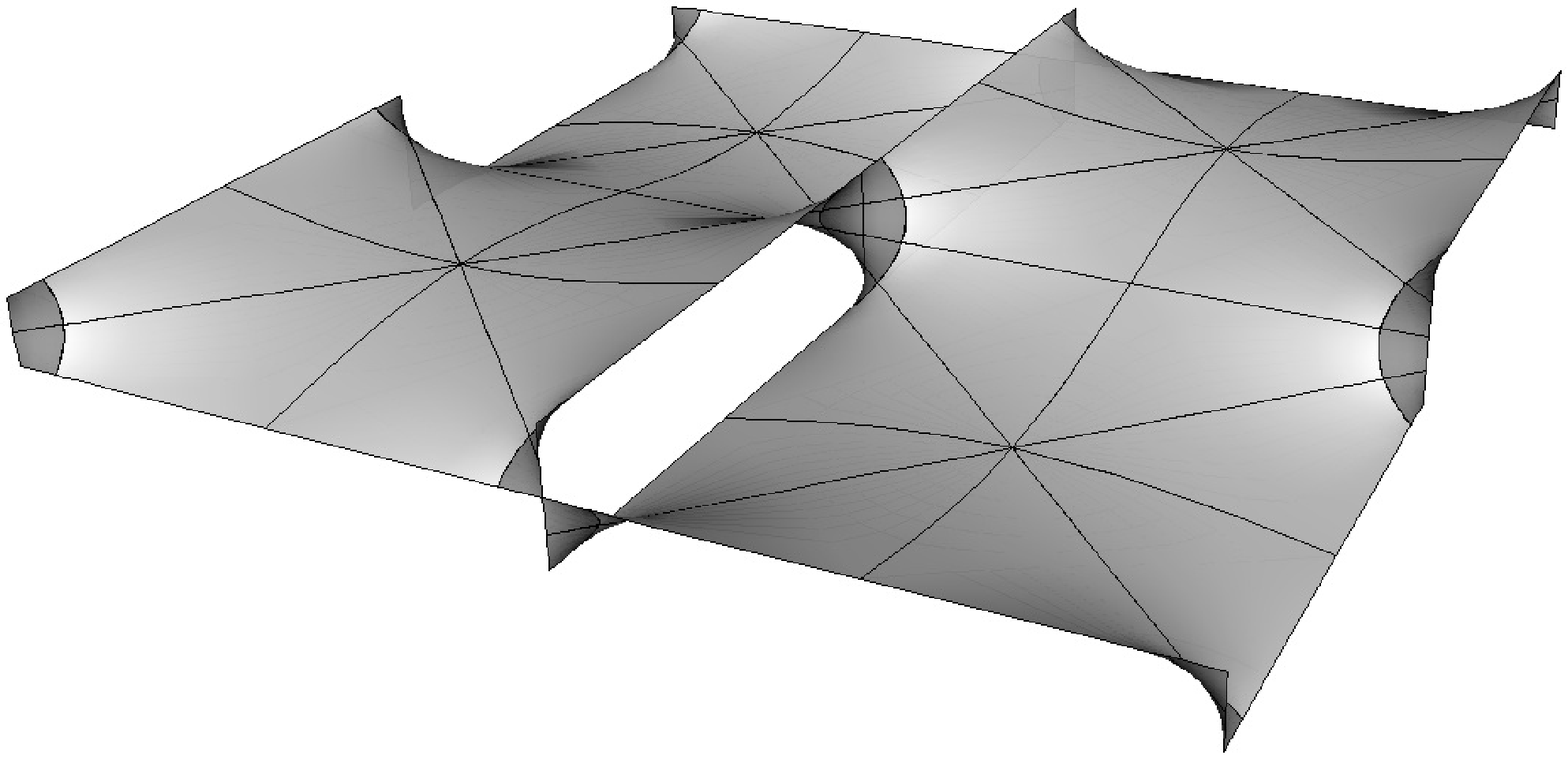}} &
	\scalebox{0.6}{\includegraphics[width=.50\linewidth]{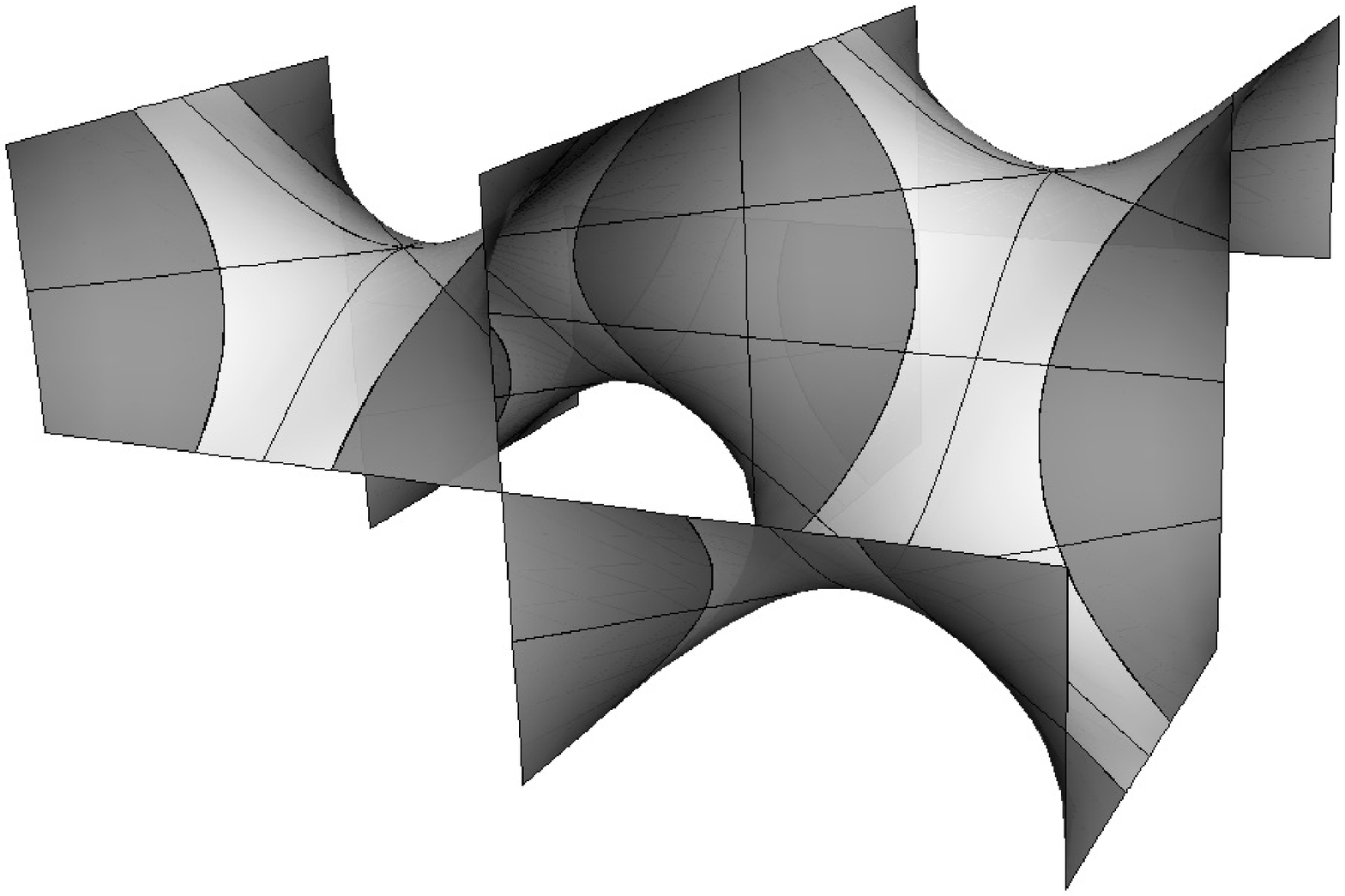}}   &
 	\scalebox{0.6}{\includegraphics[width=.40\linewidth]{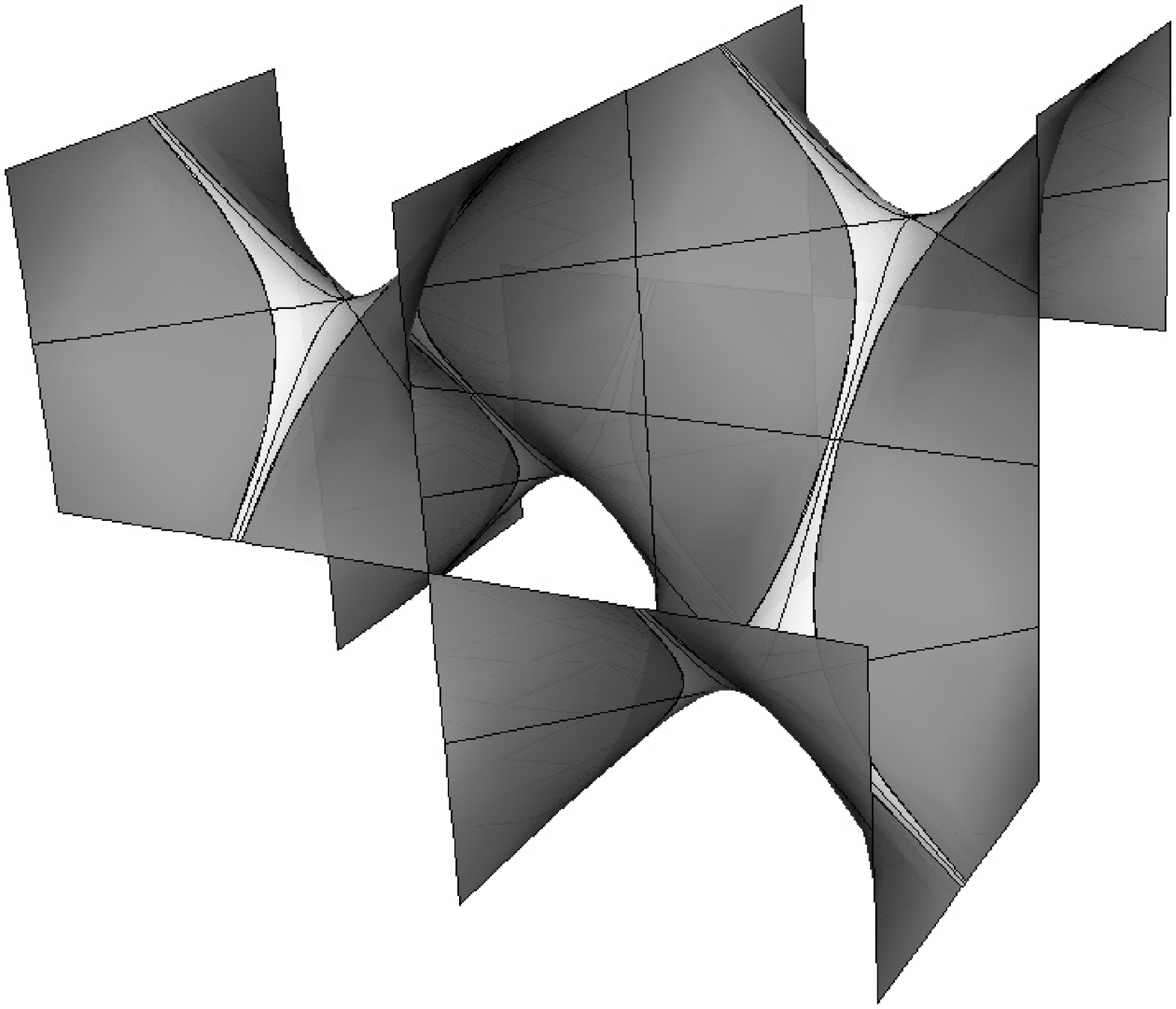}} 
\end{tabular}
\end{center}
\caption{
Embedded triply periodic zero mean curvature surfaces of mixed type constructed in this article for 
         $a=0.1$ (left), $a=(\sqrt{3}-1)/\sqrt{2}\approx 0.52$ (center), and $a=0.9$ (right). 
         The spacelike parts are indicated by grey shades 
         and the timelike parts are indicated by black shades.
}
\label{fig:mixface}
\end{figure} 

\section{Triply periodic maximal surfaces}
\label{sec:triply}

In this section, we construct triply periodic maximal 
surfaces in $\R^3_1$ based on the Schwarz 
P and D minimal surfaces in $\R^3$. 
We use either
$t,x,y$ or $x_0, x_1, x_2$ to denote the standard coordinates of $\R^3_1$. 

Take the hyperelliptic Riemann surface 
\[
   M_a  :=\left\{ 
         (z,w) \in (\C \cup \{ \infty \})^2 \, ; 
    	 \, w^2 = z^8+(a^4+a^{-4})z^4+1  \right\} 
\]
of genus $3$,
where $a\in (0,1)$ is a real constant.  
Take the Weierstrass data
\[
              G:=z,\qquad \eta_{\theta} :=e^{i\theta}\frac{dz}{w}
              \qquad \bigl(\theta\in [0,\pi),\,\, i:=\sqrt{-1}\bigr)
\]
on $M_a$, and set 
\begin{equation}\label{eq:min-r3}
   \hat{f}_{a,\theta} :=\Re\int \bigl(1-G^2,i(1+G^2),2G\bigr)\eta_\theta.
\end{equation}  
Then $\hat{f}_{a,\theta}$ gives a minimal surface in $\R^3$. 
When $a=(\sqrt{3}-1)/\sqrt{2}$, that is, when $a^4+a^{-4}=14$, 
$\hat{f}_{a,0}$ (resp. $\hat{f}_{a,\pi/2}$) is called the {\it Schwarz P surface} 
(resp. the {\it Schwarz D surface}). 
Also, for $a\in (0,1)$, 
$\hat{f}_{a,0}$ (resp. $\hat{f}_{a,\pi/2}$) is called the {\it Schwarz P family} 
(resp. the {\it Schwarz D family}). 
For the period computation for those minimal surfaces, 
we refer to \cite{R}.

Now, for the same Riemann surface $M_a$ and the Weierstrass data $(G,\eta_\theta)$ as above, 
we set 
\[
   f_{a,\theta}:=\Re\int \Phi_\theta :\widetilde{M_a}\longrightarrow\R^3_1,
\]  
where 
 \begin{equation}\label{eq:maxface}
\Phi_\theta:=
\bigl(-2G, 1+G^2,i(1-G^2)\bigr)\eta_\theta.
 \end{equation}
Then $f_{a,\theta}$ gives a {\it maxface}
(i.e. a maximal surface with admissible singularities,
see \cite{CR2})
in Lorentz-Minkowski $3$-space $\R^3_1$ of
signature $(-,+,+)$.
A point $p\in M_a$ is a singular point
if and only if $|G(p)|=1$, and
a singular point $p$ is a cuspidal  edge point
if and only if $\Im (dG/(G^2\eta)) \ne 0$ at $p$ 
(cf. \cite[Fact 1.3]{FRUYY}).
Using this, one can easily check that
$f_{a,\theta}$ admits only cuspidal edge
singularities whenever $\theta\ne 0, \pi/2$
for each $a\in (0,1)$.
On the other hand, if $\theta=0$
then $f_{a,0}$ admits only cone-like singularities
(cf. \cite[Lemma 2.3]{FRUYY}).
Later, we will show that $f_{a,0}$ is triply periodic.
Since $f_{a,0}$ has the same Weierstrass data
as the Schwarz P surface in Euclidean $3$-space,
we call $f_{a,0}$ the {\it Schwarz P-type maximal surface}.

As pointed out in \cite[Definition 2.1]{KY2} and
\cite[Proposition 2.14]{CR2},
there exists 
a duality
between fold singularities and generalized cone-like singularities
via conjugation of maximal surfaces.
Since $f_{a,\pi/2}$ is the conjugate surface
of $f_{a,0}$, we can conclude that
$f_{a,\pi/2}$ admits only fold singularities 
(cf. \cite{CR2}).  
Later, we also show that $f_{a,\pi/2}$ is triply periodic.
Since $f_{a,\pi/2}$ has the same Weierstrass data
as the Schwarz D surface in Euclidean $3$-space,
we call $f_{a,\pi/2}$ the {\it Schwarz D-type maximal surface}.

\medskip
The surface $f_{a,0}$ has the following symmetries:

\begin{lemma}\label{lm:phi1-4}
It holds that 
\[
\begin{alignedat}{2}
\phi_1^*(\Phi_0)^T &=
\begin{pmatrix} 1 & 0 & \phantom{-}0 \\
                0 &          1 & \phantom{-}0 \\
                0 & 0 & -1
\end{pmatrix} \overline{(\Phi_0)}^T,
\qquad
&\phi_2^*(\Phi_0)^T &=
\begin{pmatrix}          - 1 & \phantom{-}0 & \phantom{-}0 \\
                \phantom{-}0 &          - 1 & \phantom{-}0 \\
                \phantom{-}0 & \phantom{-}0 &          - 1
\end{pmatrix} (\Phi_0)^T, \\
\phi_3^*(\Phi_0)^T &=
\begin{pmatrix}          - 1 & \phantom{-}0 & \phantom{-}0 \\
                \phantom{-}0 & \phantom{-}0 & \phantom{-} 1 \\
                \phantom{-}0 & {-}1 & \phantom{-}0
\end{pmatrix} (\Phi_0)^T, 
\qquad
&\phi_4^*(\Phi_0)^T &=
\begin{pmatrix}          - 1 & \phantom{-}0 & 0 \\
                \phantom{-}0 &          - 1 & 0 \\
                \phantom{-}0 & \phantom{-}0 & 1
\end{pmatrix} (\Phi_0)^T, 
\end{alignedat}
\]
where $(\Phi_0)^T$ is the transpose of $\Phi_0$
and $\phi_j^*(\Phi_0)^T$ $(j=1,2,3,4)$
is the pull-back of the 
$\C^3$-valued 1-form $(\Phi_0)^T$ by
the maps  $\phi_j:M_a\to M_a$  
given by 
\[
 \begin{alignedat}{2}
  \phi_1(z,w) &:= (\bar z,\bar w),\qquad
  &\phi_2(z,w) &:= (z, -w),\\
  \phi_3(z,w) &:= (iz,w),\qquad
  &\phi_4(z,w) &:= \left(\frac{1}{z},\frac{w}{z^4}\right).
 \end{alignedat}
\]
\end{lemma}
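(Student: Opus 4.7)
The plan is a direct computation from the explicit expression $\Phi_0 = (-2z,\,1+z^2,\,i(1-z^2))\,dz/w$ obtained from \eqref{eq:maxface} with $\theta=0$, $G=z$, and $\eta_0 = dz/w$. First I check that each $\phi_j$ is a well-defined self-map of $M_a$. Since $a\in\R$ the defining polynomial $w^2-z^8-(a^4+a^{-4})z^4-1$ is invariant under $z\mapsto\bar z$, under $w\mapsto-w$, and (because $i^4=1$) under $z\mapsto iz$; for $\phi_4$ one multiplies $w^2=z^8+(a^4+a^{-4})z^4+1$ through by $z^{-8}$ to see $(w/z^4)^2=(1/z)^8+(a^4+a^{-4})(1/z)^4+1$.

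Second, I record the pullbacks of $dz/w$: $\phi_1^*(dz/w)=d\bar z/\bar w=\overline{dz/w}$ (as $\phi_1$ is anti-holomorphic), $\phi_2^*(dz/w)=-dz/w$, $\phi_3^*(dz/w)=i\,dz/w$, and $\phi_4^*(dz/w)=-z^2\,dz/w$ (from $d(1/z)=-dz/z^2$). With these in hand, each identity reduces to three scalar multiplications. For $\phi_2$ every entry flips sign, yielding $\diag(-1,-1,-1)$. For $\phi_3$, the evaluations $-2(iz)\cdot i=2z$, $1+(iz)^2=1-z^2$, and $i(1-(iz)^2)=i(1+z^2)$, each multiplied by the extra $i$ coming from $\phi_3^*(dz/w)$, produce a $-1$ in the first slot and the off-diagonal exchange of the second and third entries with the signs displayed. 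For $\phi_4$, the evaluations $-2(1/z)\cdot(-z^2)=2z$, $(1+1/z^2)\cdot(-z^2)=-(1+z^2)$, and $i(1-1/z^2)\cdot(-z^2)=i(1-z^2)$ give $\diag(-1,-1,1)$.

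Third, for $\phi_1$ the coefficient functions $-2z$ and $1+z^2$ in the first two components are polynomials with real coefficients, so their pullbacks agree with their complex conjugates, contributing the $+1$'s in the first two diagonal entries. The third component carries an extra $i$; since $\overline{i}=-i$, one computes $\phi_1^*\bigl(i(1-z^2)dz/w\bigr)=i(1-\bar z^2)d\bar z/\bar w=-\overline{i(1-z^2)dz/w}$, which accounts for the $-1$ in the third diagonal entry.

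There is no real obstacle: the lemma is routine bookkeeping once the action of each $\phi_j$ on $z$, $w$, and $dz$ is written down. The only point deserving care is the anti-holomorphic nature of $\phi_1$, which is precisely why its identity features $\overline{(\Phi_0)^T}$ on the right-hand side and picks up the sign flip in the third coordinate from $\overline{i}=-i$.
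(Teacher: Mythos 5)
Your computation is correct: each $\phi_j$ preserves the defining equation of $M_a$, the pullbacks of $dz/w$ are as you record, and the resulting matrices match the lemma (including the sign flip in the third entry for the anti-holomorphic $\phi_1$ coming from $\overline{i}=-i$). The paper states this lemma without proof, treating it as exactly the routine direct verification you carried out, so your argument supplies the intended (omitted) proof.
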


In the following discussion, we apply
only the symmetry with respect to $\phi_3$.
Using this, we examine the period of $f_{a,\theta}$.  
We set
$$
b:=a^4+a^{-4}.
$$
We define the following four oriented 
regular arcs on $M_a$:
\begin{align*}
c_1(t)&:=\left(-i t,\sqrt{t^8+bt^4+1}\right) 				& t&\in [-\infty,0],	 	\\
c_2(t)&:=\left(t,\sqrt{t^8+bt^4+1}\right) 					& t&\in [0,+\infty], 		\\
c_3(t)&:=\left(-it,\sqrt{t^8+bt^4+1}\right) 				& t&\in [-1,1], \\
c_4(t)&:=\left(e^{it},-e^{2it}\sqrt{2\cos 4t+b}\right) 	& t&\in [-\pi /2,\pi /2],
\end{align*}
where all of the four square roots take positive real values.
We then define two oriented loops  
$\gamma_1: [-\infty,+\infty] \to M_a$ and $\gamma_2: [-2,\pi] \to M_a$ 
by 
\begin{equation}\label{eq:gamma12}
\gamma_1(s) := \begin{cases} 	c_1(s) & \text{ if } s \in [-\infty,0], \\
								c_2(s) & \text{ if } s \in [0, \infty]. \end{cases} 
\quad 
\gamma_2(s) :=\begin{cases} 	c_3(s+1) & \text{ if } s \in [-2,0], \\
								c_4(s-\pi/2) & \text{ if } s \in [0, \pi]. \end{cases} 
\end{equation}
The fundamental group $\pi_1(M_a)$ of $M_a$ is 
generated by eight loops
$$
\gamma_k,\quad \phi_3\circ \gamma_k,
\quad (\phi_3)^2\circ \gamma_k:=\phi_3\circ \phi_3\circ\gamma_k,
\quad (\phi_3)^3\circ \gamma_k:=\phi_3\circ\phi_3\circ 
\phi_3\circ\gamma_k
\qquad (k=1,2).  
$$
One can easily prove the next lemma following the computations in \cite{R}:

\begin{lemma}\label{lm:period-triply}
We have
\[
   \oint_{\gamma_1}\Phi_0 = \bigl(-q_1(a),q_2(a),q_2(a)\bigr),\qquad
   \oint_{\gamma_2}\Phi_0 = \bigl(iq_3(a),-iq_4(a),q_2(a)\bigr),
\]
where $q_j(a)$ $(j=1,2,3,4)$ are positive real
numbers given by
\begin{align*}
q_1(a) &:= \int_0^\infty
        \frac{4ds}{\sqrt{(b+2)s^4-2(b-6)s^2
+b+2}}=\int_0^1\frac{8t}{\sqrt{t^8+bt^4+1}}dt,
          \\
q_2(a) &:= \int_0^\infty
          \frac{ds}{\sqrt{s^4+s^2+(b+2)/16}}=
\int_0^1\frac{2(1+t^2)}{\sqrt{t^8+bt^4+1}}dt,
          \\
q_3(a) &:= \int_0^\infty
        \frac{4ds}{\sqrt{(b+2)s^4+2(b-6)s^2+
b+2}}=
\int_{-\pi/2}^{\pi/2}\frac{2dt}{\sqrt{2\cos 4t+b}},
          \\
q_4(a) &:= \int_0^\infty
          \frac{ds}{\sqrt{s^4-s^2+(b+2)/16}}.
\end{align*}
\end{lemma}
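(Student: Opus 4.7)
The plan is to substitute the explicit parametrizations $c_1,\dots,c_4$ into the one-form
$\Phi_0=(-2z,\,1+z^2,\,i(1-z^2))\,dz/w$ and reduce the resulting integrals to the forms given for $q_j(a)$.
On each arc, $z$ and $dz$ take simple real or imaginary expressions ($z=t$ on $c_2$, $z=-it$ on $c_1$ and $c_3$, $z=e^{it}$ on $c_4$), and $w$ is $\sqrt{t^8+bt^4+1}$ on $c_1,c_2,c_3$ and $-e^{2it}\sqrt{2\cos 4t+b}$ on $c_4$, so multiplying out the factors of $i$ yields concrete scalar integrands.

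First I would compute $\oint_{\gamma_1}\Phi_0$. Substituting and applying the symmetry $t\mapsto -t$ on $c_1$, the first component becomes $-\int_0^\infty 4t\,dt/\sqrt{t^8+bt^4+1}$, and the second and third components each become $\int_0^\infty(1+t^2)\,dt/\sqrt{t^8+bt^4+1}$ once one observes that $\int_0^\infty (1-t^2)\,dt/\sqrt{t^8+bt^4+1}=0$ by the substitution $t\mapsto 1/t$ (which changes the sign of the integrand). The same substitution $t\mapsto 1/t$ exchanges $[0,1]$ and $[1,\infty)$, converting $\int_0^\infty$ to $2\int_0^1$ and producing the integrands in the second formulas for $q_1(a)$ and $q_2(a)$.

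Next I would compute $\oint_{\gamma_2}\Phi_0$. The first-component integrand on $c_3$ is odd in $t\in[-1,1]$ and vanishes, while on $c_4$ it yields $i\int_{-\pi/2}^{\pi/2}2\,dt/\sqrt{2\cos 4t+b}=iq_3(a)$; similarly the third component vanishes on $c_4$ (by oddness of $\sin t$) and equals $q_2(a)$ on $c_3$. The second component reduces to
\[
-2i\int_0^1\frac{(1-t^2)\,dt}{\sqrt{t^8+bt^4+1}}-4i\int_0^{\pi/2}\frac{\cos t\,dt}{\sqrt{2\cos 4t+b}},
\]
which I would then identify with $-iq_4(a)$.

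Finally I would verify the two stated forms of each $q_j$ and the identification above by elementary substitutions that convert $t^8+bt^4+1$ (resp.\ $2\cos 4t+b$) into a biquadratic $As^4+Bs^2+C$: $t^2=(1-s)/(1+s)$ for $q_1$ and $s=\tan t$ for $q_3$; $v=t-1/t$ followed by $v=2s$ for $q_2$; and for the $q_4$-identification, $u=t+1/t$ followed by $u=2s$ on the $c_3$-integral (producing $\int_1^\infty ds/\sqrt{s^4-s^2+(b+2)/16}$), together with $s=\sin t$ on the $c_4$-integral (producing $\int_0^1 ds/\sqrt{s^4-s^2+(b+2)/16}$). The main obstacle will be this last step: tracking signs and rescaling constants so that the two pieces really glue into the single integral $\int_0^\infty ds/\sqrt{s^4-s^2+(b+2)/16}=q_4(a)$.
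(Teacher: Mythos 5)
Your computation is correct: I checked the substitutions on each arc $c_1,\dots,c_4$, the vanishing of $\int_0^\infty(1-t^2)\,dt/\sqrt{t^8+bt^4+1}$ and of the odd integrands on $c_3$ and $c_4$, and the changes of variables ($t^2=(1-s)/(1+s)$, $t\mp 1/t$, $\tan t$, $\sin t$) that produce the stated biquadratic forms, including the gluing of the two pieces into $q_4(a)$. This is exactly the direct period computation the paper leaves to the reader (it gives no proof, only the remark that one can follow the computations in Ross's paper), so your proposal matches the intended approach.
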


We define two $3\times 4$ matrices
$$
P_k:=
\Re \left(
\oint_{\gamma_k}
\!\!e^{i\theta}(\Phi_0)^T,
\oint_{\phi_3\circ\gamma_k}
\!\!e^{i\theta}(\Phi_0)^T,
\oint_{(\phi_3)^2\circ\gamma_k}
\!\!e^{i\theta}(\Phi_0)^T,
\oint_{(\phi_3)^3\circ\gamma_k}
\!\!e^{i\theta}(\Phi_0)^T 
\right)
$$
for $k=1,2$.  
Then 
$f_{a,\theta}$ is triply periodic if and
only if the eight column vectors of 
$(P_1,P_2)$ belong to some lattice of $\R^3_1$.

Now we consider the case where $\theta=0$.
Since 
$$
\oint_{(\phi_3)^j\circ\gamma_k}
\!\!(\Phi_0)^T
=
\oint_{\gamma_k}
\!\!((\phi_3)^j)^*(\Phi_0)^T \qquad (j=1,2,3;\,\,k=1,2),
$$
Lemma \ref{lm:phi1-4} yields that
\begin{align}
\label{eq:P1}
\left. P_1\right|_{\theta=0}&=
\left (\begin{array}{rrrr}
         -q_1 & q_1 & -q_1 &  q_1 \\
	  q_2 & q_2 & -q_2 & -q_2 \\
	  q_2 &-q_2 & -q_2 &  q_2
       \end{array}\right),\\
\label{eq:P2}
\left. P_2\right|_{\theta=0}
&=
\left (\begin{array}{rrrr}
           0   &  0  &  0  &  0 \\
	   0   &  q_2  & 0 & -q_2\\
	   q_2 &  0  & -q_2 & 0 
       \end{array}\right),
\end{align}
where $q_j=q_j(a)$ $(j=1,\dots,4)$ are as in Lemma~\ref{lm:period-triply}. 
Since 
each column vector of $\left. P_1\right|_{\theta=0}$
and $\left. P_2\right|_{\theta=0}$ is contained 
in the lattice
\begin{equation}
\label{eq:P3}
\Lambda
:=
\left\{m_0\begin{pmatrix}q_1\\
                       0\\
                       0\end{pmatrix}+
       m_1\begin{pmatrix}0\\
                      q_2\\
                      0\end{pmatrix}+
       m_2\begin{pmatrix} 0\\
                        0\\
                        q_2
          \end{pmatrix}\,;\, m_0,m_1,m_2\in\Z
      \right\}, 
\end{equation}
the surface
$$
f_{a,0}:M_a\longrightarrow \R^3_1/\Lambda 
$$
gives a maximal surface for all $a\in (0,1)$.  
The left hand side of Figure \ref{fig:surface_PD} 
is the figure of $f_{a,0}$ for  
 $a=(\sqrt{3}-1)/\sqrt{2}$.
 
Now we consider the case where $\theta=\pi/2$. By similar computations,
we have that
$$
\left. P_1\right|_{\theta=\pi/2}=O,\quad
\left. P_2\right|_{\theta=\pi/2}
=
 \left(\begin{array}{rrrr}
               -q_3 &  q_3  &  -q_3   & q_3 \\
	        q_4 &   0   &  -q_4   &  0  \\
	        0   & -q_4  &   0     & q_4
       \end{array}
 \right).
$$
Since each column of 
$\left. P_2\right|_{\theta=\pi/2}$ is contained in 
the lattice
$$
\Lambda'
:=
\left\{
      m_0\begin{pmatrix}q_3\\
                       q_4\\
               0\end{pmatrix}+
       m_1\begin{pmatrix}q_3\\
                      0\\
                      q_4\end{pmatrix}+
       m_2\begin{pmatrix}q_3\\
                      0\\
                      -q_4\end{pmatrix}\,;\,
		      m_0,m_1,m_2\in\Z
      \right\}, 
$$
the surface
$$
f_{a,\pi/2}:M_a\longrightarrow \R^3_1/\Lambda' 
$$
gives a maximal surface for all $a\in (0,1)$.  
The right hand side of Figure \ref{fig:surface_PD} 
corresponds to the figure of $f_{a,\pi/2}$
for $a=(\sqrt{3}-1)/\sqrt{2}$.

\begin{remark}\label{re:gyroid}
Numerical experiments suggest that there exists a   
triply periodic member in the family with $\theta \in (0, \pi/2)$, as an analogue of the Gyroid, which appears to have no self-intersections.
See Figure~\ref{fig:gyroid}. It would be interesting to theoretically confirm this observation.   
\begin{figure}
\begin{center}
\begin{tabular}{ccc}
 \includegraphics[width=.35\linewidth]{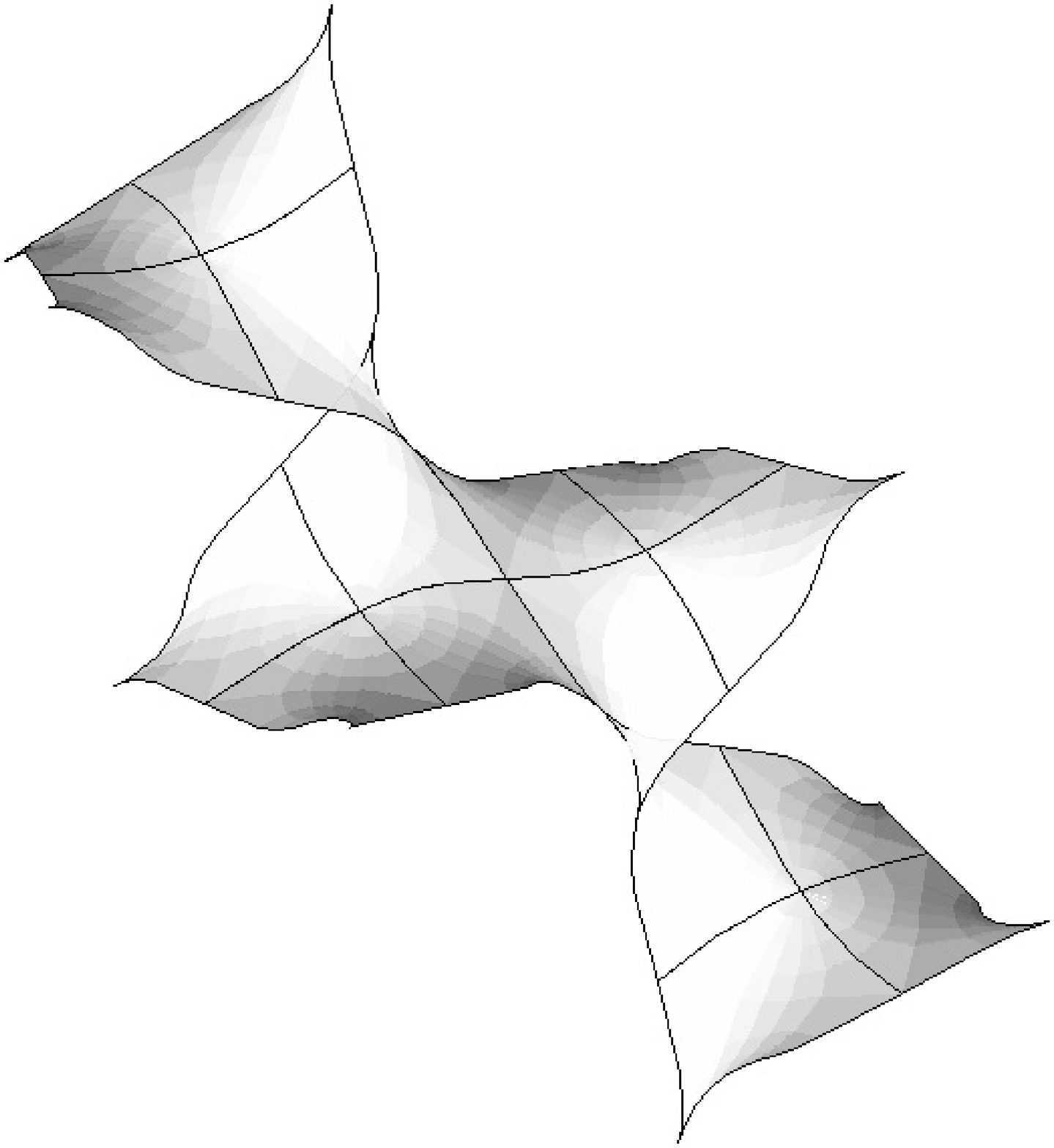}
& \hspace*{20pt} &
 \includegraphics[width=.45\linewidth]{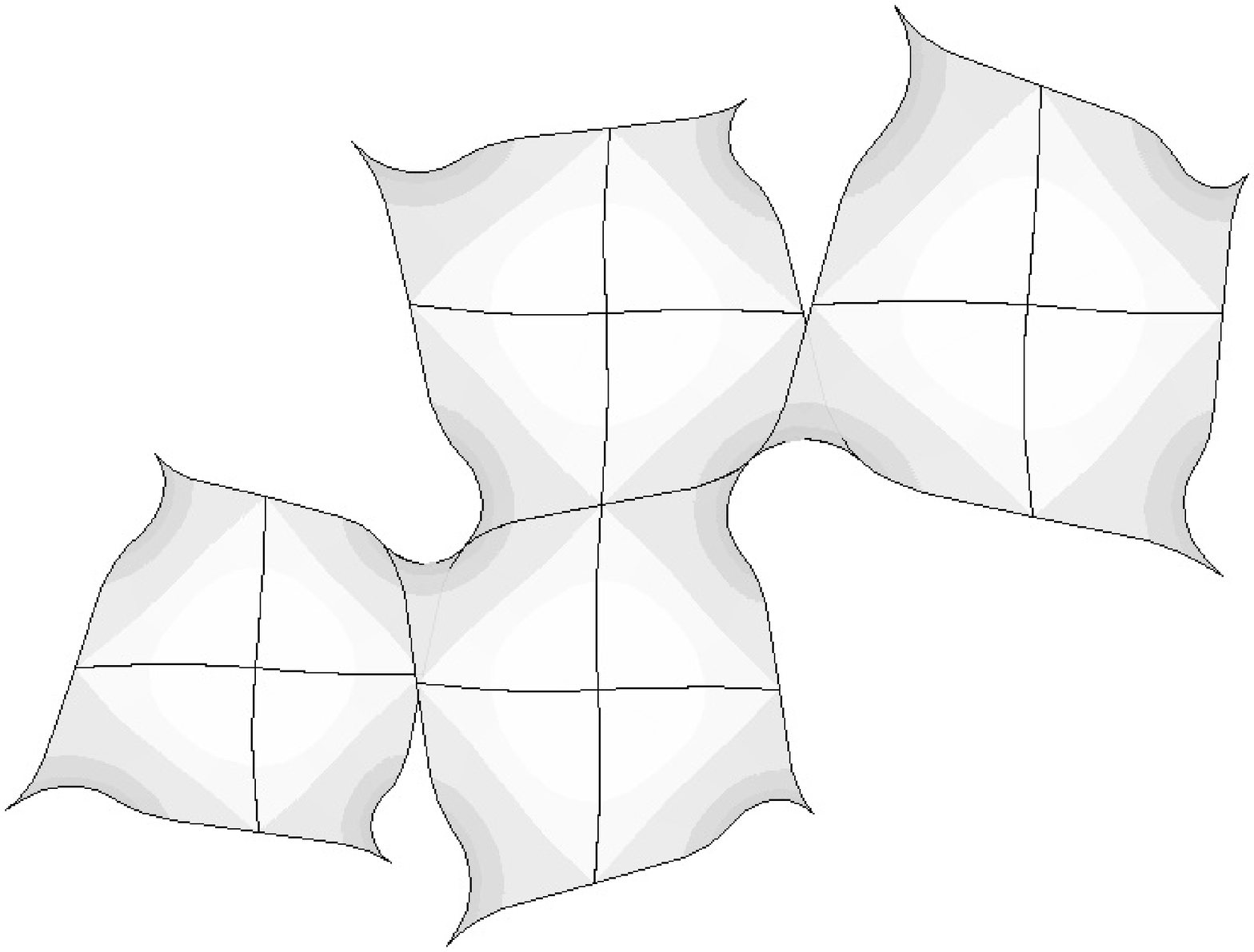} \\
  \unitlength=1pt
 \begin{picture}(100,0)
 \put(145,10){\vector(1,0){15}}
 \put(145,10){\vector(0,1){15}}
 \put(167,10){\makebox(0,0)[cc]{$x_1$}}
 \put(145,30){\makebox(0,0)[cc]{$x_2$}}
 \end{picture} & & 
\end{tabular}
\end{center}
\caption{
Two different views of the Gyroid-type maximal surface with $a\approx 0.346014$ 
and $\theta\approx 0.73073\approx 41.8685^\circ$ mentioned in Remark~\ref{re:gyroid}.
}
\label{fig:gyroid}
\end{figure} 
\end{remark}

\begin{remark}\label{rm:max-lim-1}
Here we consider the limit of $f_{a,\theta}$
as  $a\to 1$.  
The Riemann surface $M_a$ collapses to two spheres with 
four singular points at $(z,w)=(\pm e^{\pm\pi i/4},0)$, and 
the limit of $f_{a,\theta}$ 
is divided into two congruent maximal surfaces 
with the Weierstrass data 
\[
              G=z,\qquad \eta_\theta =\pm e^{i\theta}\frac{dz}{z^4+1}
              \qquad \bigl(\theta\in [0,\pi )\bigr)
\]
on $M':=(\C\cup\{\infty\})\setminus\{\pm e^{\pm\pi i/4}\}$.  
The limit of $f_{a,0}$ is a subset of the 
triply periodic
real analytic maximal surface
$$
  \mathcal S_+:=\{(t,x,y)\in \R^3_1\,;\, \cos t =\cos x \cos y\}
$$
called {\it spacelike  Scherk surface}, 
which contains singular lightlike lines
 (see \cite{CR} and \cite{CR2} for the whole figure of $\mathcal S_+$). 
On the other hand, the limit of $f_{a,\pi/2}$ is a subset of
the zero mean curvature entire graph
$$
  \mathcal S_0:=\{(t,x,y)\in \R^3_1\,;\, 
  e^t \cosh x=\cosh y 
\},
$$
given by Osamu Kobayashi \cite{K} (see also \cite{CR} and \cite{CR2}). 
$\mathcal S_0$ also contains four
disjoint timelike minimal surfaces as subsets. 
See Figure \ref{fig:scherk}.

\begin{figure}
\begin{center}
\begin{tabular}{cc}
 \includegraphics[width=.40\linewidth]{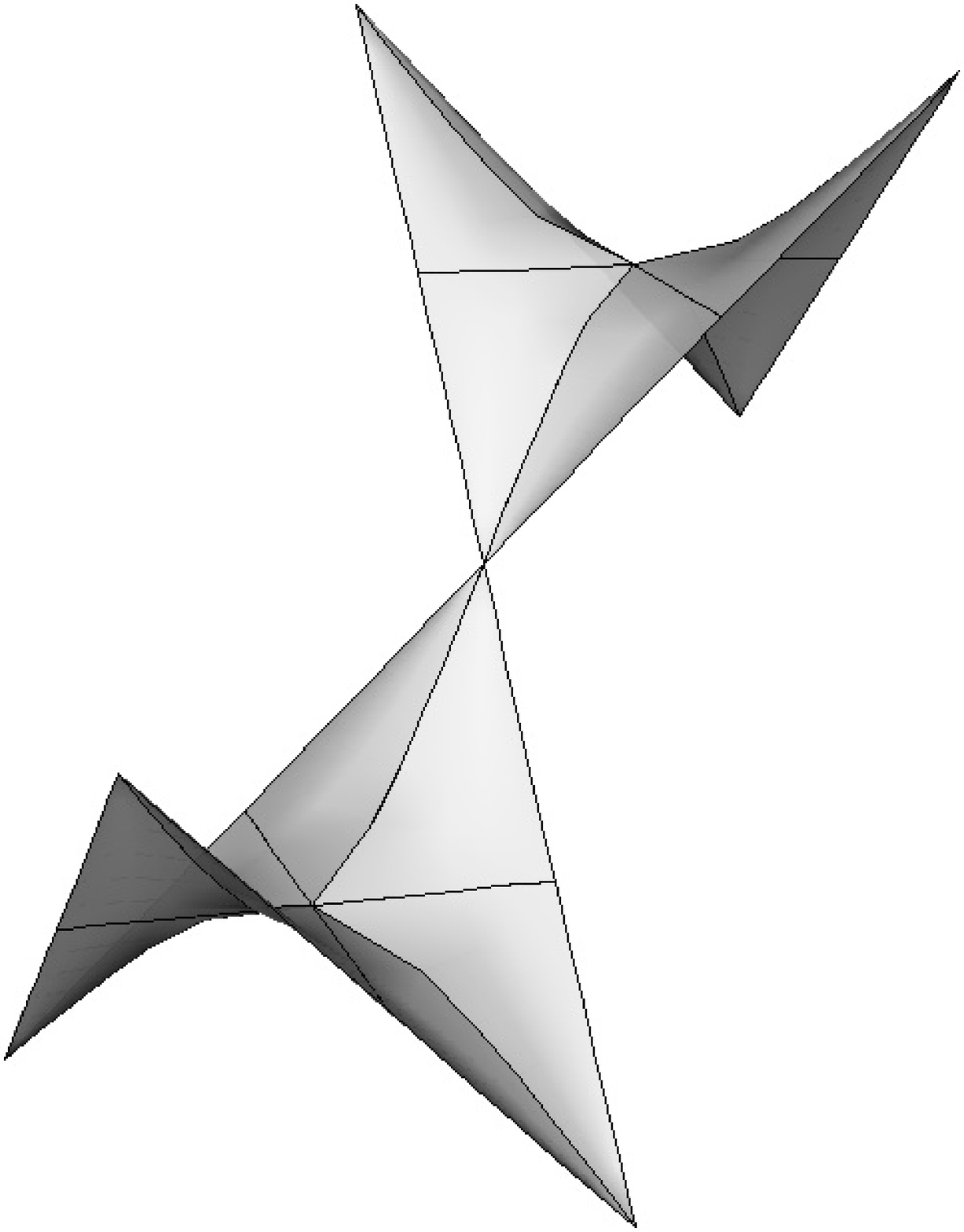}&
 \includegraphics[width=.40\linewidth]{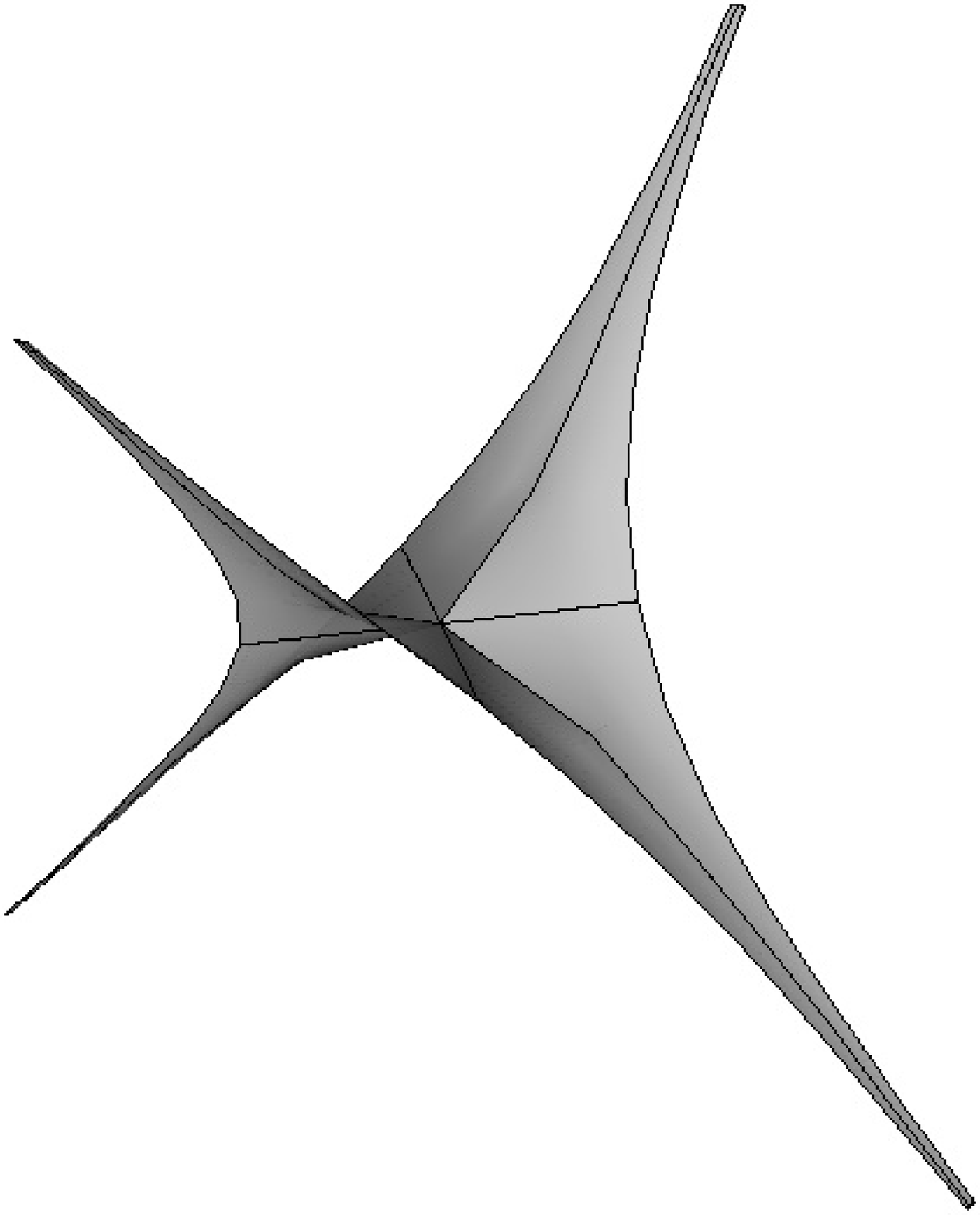} \\
\end{tabular}
\end{center}
\caption{
  The spacelike Scherk surface (left) and the spacelike part of the surface $\mathcal S_0$
  in Remark~\ref{rm:max-lim-1} (right).
}
\label{fig:scherk}
\end{figure} 
\end{remark}

\begin{remark}\label{rm:max-lim-0}
Here we consider the limit of $f_{a,\theta}$
as  $a\to 0$. We first rescale the surface as $\sqrt{a^4+a^{-4}}f_{a,\theta}$ 
and then take the limit as $a\to 0$.  
The Riemann surface $M_a$ collapses as  $a\to 0$ to two spheres with 
two singular points at $(z,w)=(0,0), (\infty,\infty)$, and 
the limit of $f_{a,\theta}$ 
is divided into two congruent maximal surfaces 
with the Weierstrass data 
\[
              G=z,\qquad \eta_\theta =\pm e^{i\theta}\frac{dz}{z^2}
              \qquad \bigl(\theta\in [0,\pi )\bigr)
\]
on $M':=\C\setminus\{0\}$.  
The limits of $f_{a,0}$ and of $f_{a,\pi/2}$ as $a\to 0$ are the {\it spacelike elliptic catenoid}
and the {\it spacelike elliptic helicoid, respectively.}  
See Figure \ref{fig:cat-hel}.

\begin{figure}
\begin{center}
\begin{tabular}{cc}
 \includegraphics[width=.40\linewidth]{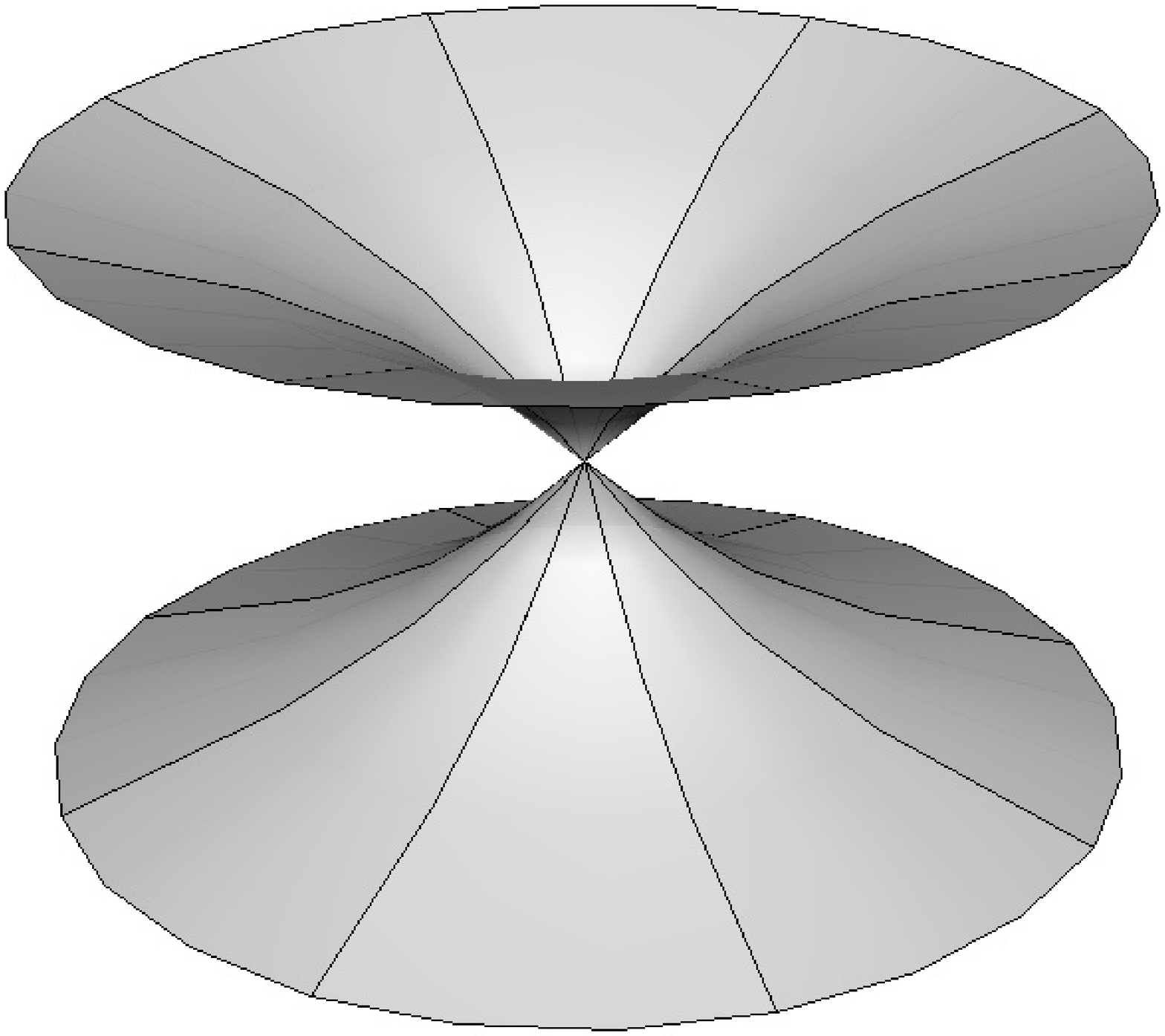}&
 \includegraphics[width=.40\linewidth]{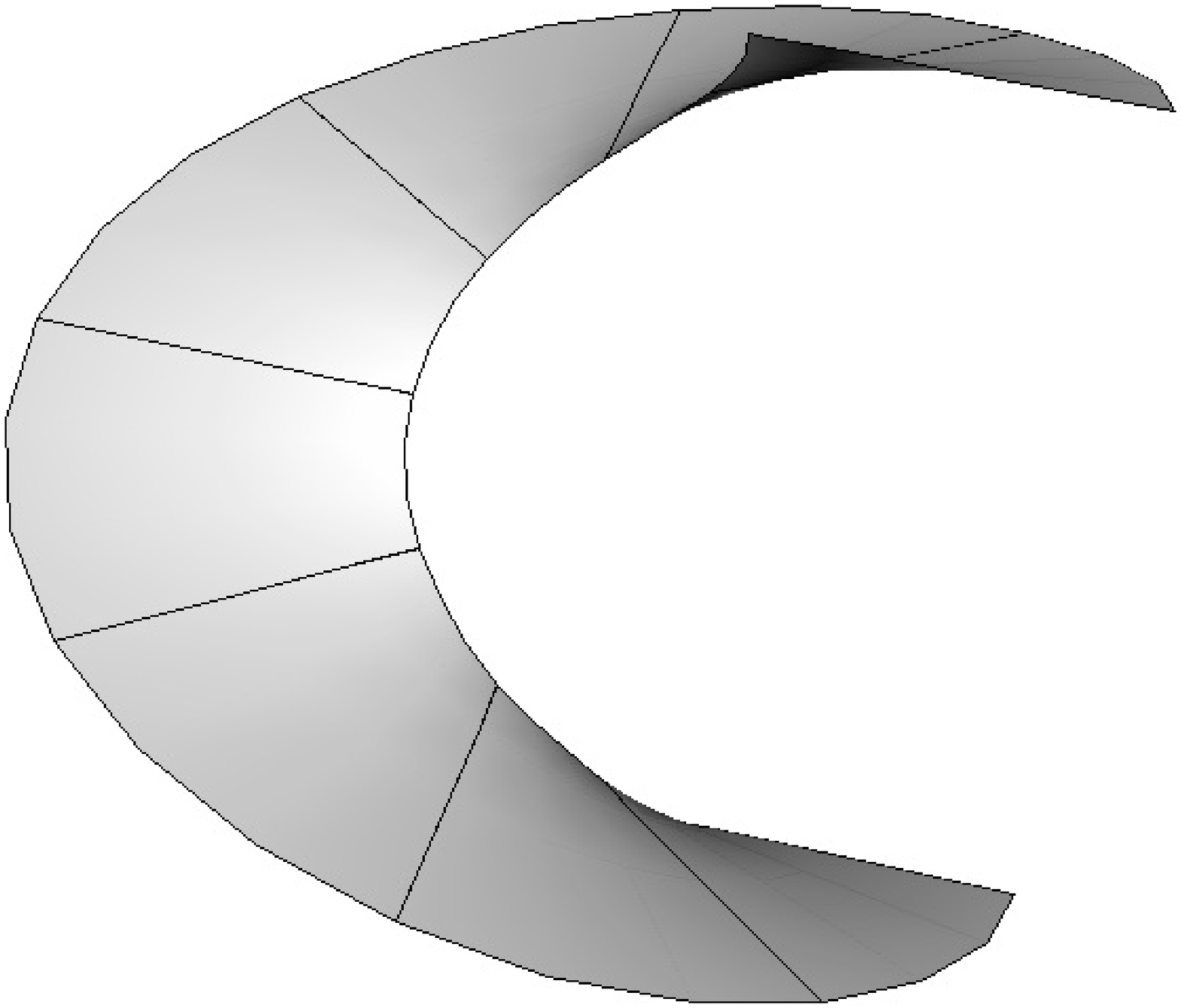} \\
\end{tabular}
\end{center}
\caption{
  The spacelike elliptic catenoid (left) and the spacelike elliptic helicoid (right).
}
\label{fig:cat-hel}
\end{figure} 
\end{remark}

\section{Analytic extensions of Schwarz D-type maximal surfaces 
to triply periodic zero mean curvature surfaces}
\label{sec:extension}

When a maximal surface has fold singularities, one can analytically  
extend the maximal surface to 
a timelike surface with mean curvature zero.  
This fact has been observed in 
\cite[Theorem 2.13]{CR2}. 
In the previous section,
we observed that 
the Schwarz D-type surface $f_{a,\pi/2}$
admits only fold singularities for each $0<a<1$.
The image of the singular set of 
$f_{a,\pi/2}$
is a lightlike curve
\begin{equation}\label{eq:gamma}
\gamma_a (s) := \int_0^s \xi_a (t) \left( 1,\, -\cos t\ ,\, -\sin t \right) \,dt,
\qquad \left(\xi_a(t):=\frac{2}{\sqrt{2\cos 4 t +a^4+a^{-4}}}\right).
\end{equation}
Then 
\[
  \tilde{f}_a (u,v):=\frac{1}{2}\left(\gamma_a (u+v) + \gamma_a (u-v) \right),
\]
is a timelike minimal surface 
(that is, a timelike surface with mean curvature zero,\,\, 
see Figure \ref{fig:mixface}) such that
\begin{equation}\label{eq:fold}
  \tilde{f}_a(u,0)=\gamma_a(u), 
\end{equation}
and $\tilde f_a$ is the analytic extension of the maximal surface $f_{a,\pi/2}$
(see Section 2 of \cite{CR2}). 

The following assertion holds.

\begin{lemma}\label{lm:f*imm}
$\tilde{f}_a(u,v)$ is an immersion on $\R\times (0,\pi)$. 
\end{lemma}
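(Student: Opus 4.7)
The plan is to verify that the Jacobian of $\tilde f_a$ has rank two at every point of $\mathbb{R}\times (0,\pi)$ by directly computing the partial derivatives and using the explicit formula for $\gamma_a'$.

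First I would compute
\[
\tilde f_{a,u}(u,v)=\tfrac{1}{2}\bigl(\gamma_a'(u+v)+\gamma_a'(u-v)\bigr),\qquad
\tilde f_{a,v}(u,v)=\tfrac{1}{2}\bigl(\gamma_a'(u+v)-\gamma_a'(u-v)\bigr),
\]
so that $\tilde f_{a,u}\pm \tilde f_{a,v}$ recovers $\gamma_a'(u\pm v)$. Hence $\{\tilde f_{a,u},\tilde f_{a,v}\}$ is linearly independent if and only if $\{\gamma_a'(u+v),\gamma_a'(u-v)\}$ is linearly independent.

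Next I would use the explicit form from \eqref{eq:gamma},
\[
\gamma_a'(t)=\xi_a(t)\,(1,-\cos t,-\sin t),
\]
together with the observation that $\xi_a(t)>0$ for every $t\in\mathbb{R}$. The latter is the only point requiring a check: since $a\in(0,1)$, the AM--GM inequality (applied with strict inequality because $a\neq 1$) gives $a^4+a^{-4}>2$, hence $2\cos 4t+a^4+a^{-4}\ge a^4+a^{-4}-2>0$, so $\xi_a$ is a well-defined positive smooth function. In particular $\gamma_a'(t)\neq 0$ for all $t$, and $\gamma_a'(u+v)$, $\gamma_a'(u-v)$ are linearly dependent exactly when the direction vectors $(1,-\cos(u+v),-\sin(u+v))$ and $(1,-\cos(u-v),-\sin(u-v))$ are.

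Finally, these two vectors share the same first coordinate $1$, so any proportionality must have ratio $1$, forcing $\cos(u+v)=\cos(u-v)$ and $\sin(u+v)=\sin(u-v)$, equivalently $2v\in 2\pi\mathbb{Z}$, i.e.\ $v\in\pi\mathbb{Z}$. Since $v\in(0,\pi)$ excludes all such values, the two tangent vectors are linearly independent everywhere on $\mathbb{R}\times(0,\pi)$, and $\tilde f_a$ is an immersion there. The argument is essentially a one-line linear algebra observation; the only genuine step beyond manipulation is the positivity of $\xi_a$, which is immediate from $a\in(0,1)$, so I do not anticipate any real obstacle.
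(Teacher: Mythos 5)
Your proposal is correct and follows essentially the same route as the paper: compute $\tilde f_{a,u}\pm\tilde f_{a,v}=\gamma_a'(u\pm v)$, reduce to linear dependence of $\gamma_a'(u+v)$ and $\gamma_a'(u-v)$, and use the common first coordinate of the direction vectors to force $v\in\pi\Z$. The only addition is your explicit check that $\xi_a>0$ via $a^4+a^{-4}>2$, which the paper leaves implicit.
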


\begin{proof}
Since
\[
\frac{\partial \tilde{f}_a}{\partial u}=
    \frac{1}{2}\left(\gamma_a'(u+v)+\gamma_a'(u-v)\right),\quad
\frac{\partial \tilde{f}_a}{\partial v}
      =\frac{1}{2}\left(\gamma_a'(u+v)-\gamma_a'(u-v)\right), 
\]
$(u,v)$ is a singular point of $\tilde f_a$
(that is, a point where $\tilde f_a$ is not an immersion)
if and only if 
\[
\gamma_a'(u+v)=\xi_a(u+v) \left( 1,\, -\cos(u+v),\, -\sin (u+v) \right)
\]
and 
\[
\gamma_a'(u-v)=\xi_a(u-v) \left( 1,\, -\cos(u-v),\, -\sin (u-v) \right)
\]
are linearly dependent, where $\gamma_a'$
is the derivative of the curve $\gamma_a$.
The linear dependency 
of two vectors $\gamma_a'(u+v)$ and $\gamma_a'(u-v)$
is equivalent to the validity of the two equalities 
$$\cos(u+v)=\cos(u-v)  \quad\text{and}\quad		\sin(u+v)=\sin(u-v),$$
that is, $v\equiv 0 \pmod{\pi}$, proving the lemma.
\end{proof}

\begin{lemma}\label{lm:st-lines}
The timelike surface $\tilde f_a$ contains 
three line segments.
More precisely, 
\begin{enumerate}
\item\label{vertical} $\tilde f_a(u,\pi/2)$ $(u\in \R)$
is a straight line  parallel to the $x_0$-axis.
\item\label{horizontal1} $\tilde f_a(0,v)$ $(0<v<\pi)$
is a line segment parallel to the $x_2$-axis.
\item\label{horizontal2} $\tilde f_a(\pi/4,v)$ 
$(0<v<\pi)$ is a line segment  
parallel to the line $\{ x_0 =x_1 + x_2 = 0 \}$.
\end{enumerate}
\end{lemma}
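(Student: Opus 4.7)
The plan is to derive each assertion by a direct computation of the appropriate partial derivative of $\tilde f_a(u,v)=\tfrac12(\gamma_a(u+v)+\gamma_a(u-v))$ along the given slice, and then to check that the components transverse to the claimed line direction vanish identically. With $\gamma_a'(s)=\xi_a(s)(1,-\cos s,-\sin s)$, the essential input is the collection of symmetries of the weight $\xi_a(t)=2/\sqrt{2\cos 4t+a^4+a^{-4}}$: evenness in $t$, $\pi/2$-periodicity, and, by combining them, the reflection identity $\xi_a(\pi/4+v)=\xi_a(\pi/4-v)$ (equivalent to $\xi_a(\pi/2-t)=\xi_a(t)$, visible from $\cos 4(\pi/2-t)=\cos 4t$).

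For \ref{vertical}, along $v=\pi/2$ the period $\pi/2$ of $\xi_a$ gives $\xi_a(u\pm\pi/2)=\xi_a(u)$, while $\cos(u\pm\pi/2)=\mp\sin u$ and $\sin(u\pm\pi/2)=\pm\cos u$; hence in $\partial_u\tilde f_a(u,\pi/2)=\tfrac12(\gamma_a'(u+\pi/2)+\gamma_a'(u-\pi/2))$ the $x_1$- and $x_2$-components cancel, leaving $\xi_a(u)(1,0,0)$. Consequently the $x_1$- and $x_2$-coordinates of $\tilde f_a(u,\pi/2)$ are constant in $u$, and since $\xi_a$ is strictly positive and periodic on $\R$ the $x_0$-coordinate is strictly increasing and sweeps out all of $\R$, producing a full straight line parallel to the $x_0$-axis. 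For \ref{horizontal1}, specialize to $u=0$: evenness of $\xi_a$ (together with the parity of $\cos$ and $\sin$) collapses $\partial_v\tilde f_a(0,v)=\tfrac12(\gamma_a'(v)-\gamma_a'(-v))$ to $\xi_a(v)(0,0,-\sin v)$, so the $x_0$- and $x_1$-components of $\tilde f_a(0,v)$ are constant in $v$ and, vanishing at $v=0$ because $\gamma_a(0)=0$, are identically zero, placing the slice on the $x_2$-axis.

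For \ref{horizontal2}, specialize to $u=\pi/4$: the reflection identity lets one factor out the common weight $\xi_a(\pi/4+v)=\xi_a(\pi/4-v)$, and the sum-to-product formulas applied to $\cos(\pi/4\pm v)$ and $\sin(\pi/4\pm v)$ reduce $\partial_v\tilde f_a(\pi/4,v)$ to $\tfrac{\sqrt 2}{2}\,\xi_a(\pi/4+v)\sin v\,(0,1,-1)$, so that $x_0$ and $x_1+x_2$ are constant in $v$ and the slice lies on a line parallel to $\{x_0=x_1+x_2=0\}$. None of these computations is difficult; the only point requiring a moment's care is identifying the correct symmetry of $\xi_a$ in each case, with the reflection about $t=\pi/4$ used in \ref{horizontal2} being the least obvious of the three.
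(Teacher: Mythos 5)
Your proposal is correct and follows essentially the same route as the paper: compute $\partial_u\tilde f_a(u,\pi/2)$ and $\partial_v\tilde f_a(0,v)$, $\partial_v\tilde f_a(\pi/4,v)$ directly and exploit the $\pi/2$-periodicity, evenness, and reflection symmetry of $\xi_a$ to see that the transverse components vanish. The extra observations you add (that the $x_0$-coordinate sweeps out all of $\R$ in \ref{vertical}, and that the slice in \ref{horizontal1} actually lies on the $x_2$-axis) are correct refinements but not a different method.
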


\begin{proof}
By \eqref{eq:gamma}, 
\begin{align*}
\frac{\partial \tilde f_a}{\partial u}(u,\pi/2)
&= \frac{1}{2}\left(\gamma_a'(u+\pi/2)+\gamma_a'(u-\pi/2)\right) \\
&= \frac{1}{2}\xi_a(u+\pi/2)\left( 1,\, -\cos(u+\pi/2),\, -\sin (u+\pi/2) \right) \\
&\quad + \frac{1}{2}\xi_a(u-\pi/2)\left( 1,\, -\cos(u-\pi/2),\, -\sin (u-\pi/2) \right) \\
&= \xi_a(u)(1,\,0,\,0),
\end{align*}
because $\xi_a(u+\pi/2)=\xi_a(u-\pi/2)=\xi_a(u)$. 
Thus \ref{vertical} is proved. 
Similarly, direct computations show 
\begin{align*}
\frac{\partial \tilde f_a}{\partial v}(0,v) &= \dfrac{-2\sin v}{\sqrt{2\cos 4v+a^4+a^{-4}}}(0,\,0,\,1), \\
\frac{\partial \tilde f_a}{\partial v}(\pi/4,v) &= \dfrac{\sqrt{2}\sin v}{\sqrt{-2\cos 4v+a^4+a^{-4}}}(0,\,1,\,-1).  
\end{align*}
Thus \ref{horizontal1} and \ref{horizontal2} hold.  
\end{proof}

Like minimal surfaces in $\R^3$, both spacelike maximal surfaces and timelike minimal surfaces 
have reflection principles as follows. 

\begin{fact}[cf. {\cite[Theorem 3.10]{ACM} and \cite[Lemmas 4.1 and 4.2]{KKSY}}]\label{fc:reflection}\ \par
\begin{enumerate}
\item Suppose a spacelike maximal surface contains a spacelike line. 
      Then the surface is symmetric with respect to the line. 
\item Suppose a spacelike maximal surface is perpendicular to a timelike plane. 
      Then the surface is symmetric with respect to the plane. 
\item Suppose a timelike minimal surface contains a spacelike line or a timelike line. 
      Then the surface is locally symmetric with respect to the line. 
\item Suppose a timelike minimal surface is perpendicular to a spacelike plane or a timelike plane. 
      Then the surface is locally symmetric with respect to the plane. 
\end{enumerate}
\end{fact}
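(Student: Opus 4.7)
The plan is to reduce each of the four statements to a Schwarz-type reflection principle for the Weierstrass-style representation of the surface. Because the conclusions are local and invariant under isometries of $\R^3_1$, I would first pre-compose with a Lorentz motion to place the line along a coordinate axis, or the plane in standard position.

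For (1) and (2), start from the maxface representation $f=\Re\int\Phi$, with $\Phi=(\phi_0,\phi_1,\phi_2)$ holomorphic and satisfying $-\phi_0^2+\phi_1^2+\phi_2^2=0$. Choose conformal coordinates $w=u+iv$ near the line, or near the foot of the perpendicular to the plane, so that the preimage of the relevant curve is the real axis $\{v=0\}$. The hypothesis ``spacelike line'' translates to $\Re\Phi$ being a real multiple of a fixed unit spacelike vector on $\{v=0\}$, which forces two components of $\Phi$ to be purely imaginary and one to be purely real on $\{v=0\}$; the isotropy relation is preserved under these reality conditions. Schwarz reflection then yields $\Phi(\bar w)=S\,\overline{\Phi(w)}$, where $S$ is the Lorentz reflection fixing the line, and integrating gives $f(\bar w)=S\,f(w)+\text{const}$, the claimed symmetry. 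For (2), perpendicularity to a timelike plane makes the boundary curve's tangent orthogonal to that plane, yielding an analogous reality condition on $\Phi$ and the same Schwarz argument with $S$ now the Lorentz reflection across the plane.

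For (3) and (4), use null coordinates in which a timelike minimal surface takes the form $\tilde f(u,v)=\tfrac12\bigl(\alpha(u+v)+\beta(u-v)\bigr)$ for two null curves $\alpha,\beta$---this is the split-complex (paracomplex) analog of the maxface representation, as already exemplified by the formula for $\tilde f_a$. A line contained in the surface corresponds, after a Lorentz motion, to a parameter locus along which $\alpha'$ and $\beta'$ are constrained by an algebraic relation determined by the causal character of the line. Reflecting in the parameter domain across that locus, paired with the corresponding Lorentz reflection $S$ on the target, yields the claimed local symmetry $\tilde f = S\circ\tilde f$. Perpendicularity to a spacelike or timelike plane is handled identically, with $\alpha'$ and $\beta'$ now required to lie in prescribed complementary subspaces along the locus.

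The hard part will be bookkeeping rather than insight: one must enumerate the subcases of (1)--(4) and verify in each that the pairing of the parameter-domain reflection with the correct target involution gives a genuine Lorentz isometry that fixes the prescribed line or plane. The essential subtlety is that lightlike lines and lightlike planes never produce a reflective symmetry---this is exactly why (3) and (4) restrict attention to spacelike or timelike objects---and the nondegenerate causal type is what makes it possible to normalize the reality conditions on $\Phi$, or on the pair $(\alpha',\beta')$, cleanly enough for Schwarz reflection to apply.
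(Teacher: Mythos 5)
The paper itself gives no proof of this statement---it is quoted as a known Fact, with the proofs delegated to \cite{ACM} and \cite{KKSY}---and those references argue essentially as you propose: Schwarz reflection applied to the holomorphic Weierstrass data $\Phi$ for the spacelike maximal cases (1)--(2), and the d'Alembert/null-curve decomposition $\tilde f=\tfrac12(\alpha(u+v)+\beta(u-v))$ together with the uniqueness of the Bj\"orling-type problem for the timelike cases (3)--(4), with the ``locally'' reflecting the hyperbolic (domain-of-dependence) nature of the latter. Your sketch is therefore correct and consistent with the cited proofs; the one point worth tightening is the claim in (1) that the reality conditions force one component of $\Phi$ to be purely real on $\{v=0\}$---the isotropy relation alone only gives that its square is real, and ruling out the purely imaginary alternative requires invoking the regularity and spacelike character of the immersion along the line.
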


We know that $\tilde f_a(u,0)$ ($u\in\R$) 
consists of fold singularities (cf. \eqref{eq:fold}).
Since $\tilde f_a(u,\pi/2)$ ($u\in\R$) is a straight line, 
(3) of Fact~\ref{fc:reflection} implies that each 
point of $\tilde f_a(u,\pi)$ ($u\in\R$) is also a fold singularity, 
and we can analytically extend $\tilde f_a$ to the Schwarz D-type maximal surface, 
by Lemma~\ref{lm:st-lines}. 
Also,  by Lemma~\ref{lm:st-lines}, we can consider 
\begin{equation}\label{eq:timelike-fp}
\Omega^\text{min}_{a}:=\{\tilde f_a(u,v)\in\R^3_1\,;\,0\le u\le\pi/4,\,0<v\le\pi/2\}
\end{equation}
to be a fundamental piece of $\tilde f_a$, 
because the whole timelike minimal immersion $\tilde f_a(u,v)$ ($u\in\R$, $0<v<\pi$) 
can be obtained by reflections of $\Omega^\text{min}_{a}$.
Note, by Lemma~\ref{lm:f*imm}, that $\Omega^\text{min}_{a}$ is immersed.
The boundary $\partial\Omega^\text{min}_{a}$ of 
$\Omega^\text{min}_{a}$ consists of three straight line segments
\begin{align*}
\mathcal{L}_A^\text{min}&:=\{\tilde f_a(0,v)\in\R^3_1\,;\,0<v\le\pi/2\},\\
\mathcal{L}_B^\text{min}&:=\{\tilde f_a(\pi/4,v)\in\R^3_1\,;\,0<v\le\pi/2\},\\
\mathcal{L}_C^\text{min}&:=\{\tilde f_a(u,\pi/2)\in\R^3_1\,;\,0\le u\le\pi/4\},
\end{align*}
and the singular curve $\gamma_a(s)$ ($0\le s \le \pi/4$).

\medskip
\noindent
{\it Proof of Theorem A.}
For simplicity, 
we denote $f_{a,\pi/2}$ by $f_a$, where $f_{a,\pi/2}$ was defined in Section \ref{sec:triply}. 

By Lemma \ref{lm:phi1-4} and Fact \ref{fc:reflection}, we can consider 
\begin{equation}\label{eq:spacelike-fp}
\Omega^\text{max}_{a}:=\{f_a(z)\in\R^3_1\,;\,0\le |z|<1,\,0\le \arg z \le \pi/4\}
\end{equation}
to be a fundamental piece of $f_a$. 
We note that $\Omega^\text{max}_{a}$ is immersed. 
The boundary $\partial\Omega^\text{max}_{a}$ of $\Omega^\text{max}_{a}$ 
consists of two straight line segments
which correspond to 
$$
\{z\in\C\,;\,0\le |z|<1,\,\arg z=0\} 
\quad\mbox{and}\quad 
\{z\in\C\,;\,a\le |z|<1,\,\arg z=\pi/4\},
$$
a planar curve  which corresponds to 
$$
\{z\in\C\,;\,0\le |z|\le a,\,\arg z=\pi/4\},
$$
and the singular curve $\gamma_a(s)$ ($0\le s \le \pi/4$).
We set
\begin{equation}\label{eq:0mega-ao}
\Omega_{a}^1:=\Omega^\text{max}_{a} \cup\{\gamma_a(s)\,;\,0\le s\le \pi/4\}\cup \Omega^\text{min}_{a}. 
\end{equation}
Since $\Omega^\text{max}_{a}$ and $\Omega^\text{min}_{a}$ match analytically 
through $\gamma_a(s)$ ($0\le s\le \pi/4$), 
$\Omega_{a}^1$ is immersed (see \cite[Section 2]{CR2} for the details). 
We define
\begin{align*}
\mathcal{L}_A^\text{max}&:=\{f_a(z)\in\R^3_1\,;\,0\le |z|<1,\,\arg z=0\}, \\
\mathcal{L}_B^\text{max}&:=\{f_a(z)\in\R^3_1\,;\,a\le |z|<1,\,\arg z=\pi/4\}, \\
\mathcal{L}_C^\text{max}&:=\{f_a(z)\in\R^3_1\,;\,0\le |z|\le a,\,\arg z=\pi/4\}.
\end{align*}
It can be easily checked that 
$\mathcal{L}_A^\text{max}$ is parallel to the $x_2$-axis and 
$\mathcal{L}_B^\text{max}$ is parallel to the line 
$$
\{(x_0,x_1,x_2)\in\R^3_1\,;\,x_0=0,\,x_1+x_2=0\}, 
$$
and 
$\mathcal{L}_C^\text{max}$ is contained in a plane which is parallel to the plane 
$$
\{(x_0,x_1,x_2)\in\R^3_1\,;\,x_1=x_2\}.
$$

Thus $\mathcal{L}_A^\text{max}$ and $\mathcal{L}_A^\text{min}$, as well as 
$\mathcal{L}_B^\text{max}$ and $\mathcal{L}_B^\text{min}$, are collinear. 

We set $\mathcal{L}_A:=\mathcal{L}_A^\text{max}\cup \mathcal{L}_A^\text{min}$ and 
$\mathcal{L}_B:=\mathcal{L}_B^\text{max}\cup \mathcal{L}_B^\text{min}$. 
Then the image of the projection of the boundary $\partial\Omega_{a}^1$ of $\Omega_{a}^1$ 
into the $x_1x_2$-plane is an isosceles right triangle. 
See Figure \ref{fig:fo1}. 
We denote this isosceles right triangle with its interior by $\Delta$. 
We also denote the length of the segment $\mathcal{L}_C^\text{min}$ by $|\mathcal{L}_C^\text{min}|$. 

We have already seen that $\Omega_a^1$ is immersed. 
Furthermore, we have the following proposition
which will be proved in Section~\ref{sc:proof-prop}. 

\begin{proposition}\label{pr:key-prop}
For each $a\in (0,1)$, $\Omega_a^1$ is embedded 
and contained in the closure of a vertical prism over the 
isosceles right triangle $\Delta$ with height $|\mathcal{L}_C^\text{min}|$. 
\end{proposition}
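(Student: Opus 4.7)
The plan is to show that the orthogonal projection $\pi\colon \R^3_1 \to \R^2$, $(x_0,x_1,x_2)\mapsto (x_1,x_2)$, restricts to a homeomorphism $\pi|_{\Omega_a^1}\colon \Omega_a^1 \to \Delta$, while independently confining the $x_0$-coordinate on $\Omega_a^1$ to an interval of length $|\mathcal{L}_C^{\min}|$. Together these exhibit $\Omega_a^1$ as a graph over $\Delta$ contained in the desired vertical prism, proving both embeddedness and the prism containment.

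For local injectivity of $\pi$: on the spacelike piece $\Omega_a^{\max}$ this is automatic since the tangent planes are everywhere spacelike and so transverse to the timelike $x_0$-axis. On the timelike piece $\Omega_a^{\min}$, a direct computation using $\tilde f_a(u,v)=\tfrac{1}{2}(\gamma_a(u+v)+\gamma_a(u-v))$ and $\gamma_a'(s)=\xi_a(s)(1,-\cos s,-\sin s)$ gives the Jacobian of $\pi\circ \tilde f_a$ as $\tfrac{1}{2}\xi_a(u+v)\xi_a(u-v)\sin 2v$, which is strictly positive on $(0,\pi/4)\times(0,\pi/2)$; the two pieces glue analytically across the interior fold curve $\gamma_a$. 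For global injectivity, observe that the parameter region of $\Omega_a^1$---namely the sector $\{0\le |z|\le 1,\ 0\le \arg z\le \pi/4\}$ glued to the rectangle $[0,\pi/4]\times[0,\pi/2]$ along the arc representing $\gamma_a$---is a closed topological disk $D$. By Lemma~\ref{lm:st-lines} and the discussion preceding the proposition, $\pi$ sends $\mathcal{L}_A$, $\mathcal{L}_C^{\max}$, and $\mathcal{L}_B$ monotonically onto the three sides of $\Delta$, while $\mathcal{L}_C^{\min}$ (parallel to the $x_0$-axis) collapses to the right-angle vertex of $\Delta$. Hence the restriction of $\pi$ to the interior of $D$ is a proper local homeomorphism onto the interior of $\Delta$, and since the latter is simply connected it is a homeomorphism; combined with the boundary behavior this yields that $\pi|_{\Omega_a^1}\colon \Omega_a^1\to\Delta$ is a homeomorphism, and $\Omega_a^1$ is embedded.

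For the height bound, set $F(s):=\int_0^s \xi_a(t)\,dt$. On $\Omega_a^{\min}$ one has $x_0\circ \tilde f_a(u,v)=\tfrac{1}{2}(F(u+v)+F(u-v))$; using that $\xi_a$ is even and $\pi/2$-periodic, this function vanishes on $u=0$, equals the constant $F(\pi/4)=|\mathcal{L}_C^{\min}|$ on $u=\pi/4$, and is monotone in $u$, so it takes values in $[0,|\mathcal{L}_C^{\min}|]$ throughout the rectangle. On $\Omega_a^{\max}$, the coordinate $x_0$ is harmonic with respect to the complex structure on $M_a$ underlying the Weierstrass representation, and its boundary values---$0$ on $\mathcal{L}_A^{\max}$, $|\mathcal{L}_C^{\min}|$ on $\mathcal{L}_B^{\max}$, a monotone interpolation on $\mathcal{L}_C^{\max}$, and $F(s)$ on the singular arc $\gamma_a([0,\pi/4])$---all lie in $[0,|\mathcal{L}_C^{\min}|]$; the maximum principle then confines $x_0$ to this interval throughout $\Omega_a^{\max}$.

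The main obstacle is the global injectivity step, where one must verify that the glued parameter domain is genuinely a disk, that the boundary map to $\partial\Delta$ has degree one, and that the local homeomorphism property persists near the Riemann-surface branch point at $z=a\,e^{i\pi/4}$ on the boundary arc $\mathcal{L}_C^{\max}\cup\mathcal{L}_B^{\max}$. The remaining ingredients are either routine computations or standard applications of the maximum principle.
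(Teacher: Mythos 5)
Your strategy---realizing $\Omega_a^1$ as a graph over $\Delta$ via the horizontal projection $\pi(x_0,x_1,x_2)=(x_1,x_2)$---is genuinely different from the paper's, but it has a real gap exactly where you flag ``the main obstacle,'' and the headline claim is false as stated. The set $\Omega_a^1$ contains the segment $\mathcal{L}_C^\text{min}=\{\tilde f_a(u,\pi/2)\}$ (the definition \eqref{eq:timelike-fp} includes $v=\pi/2$), which is a nondegenerate segment parallel to the $x_0$-axis; you correctly observe that $\pi$ collapses it to a single vertex of $\Delta$, but then a restriction of $\pi$ to $\Omega_a^1$ cannot be a homeomorphism onto $\Delta$. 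More importantly, the global injectivity of $\pi$ on the interior is essentially the proposition itself restated, and your proposal only names the degree/properness argument rather than carrying it out: one must check that the boundary of the glued disk maps onto a simple closed curve with degree one (in particular that $\pi$ is injective on the planar, non-straight curve $\mathcal{L}_C^\text{max}$, which requires a sign computation along $z=re^{i\pi/4}$, $0\le r\le a$), that properness survives the collapse of the edge $v=\pi/2$, and that $\pi$ remains a local homeomorphism across the lightlike fold curve (true, since the tangent plane there is degenerate and hence contains no timelike vector such as $(1,0,0)$, but this is nowhere verified). The computations you do supply---the Jacobian $\tfrac12\xi_a(u+v)\xi_a(u-v)\sin 2v$ and the height formula $x_0\circ\tilde f_a(u,v)=\tfrac12\bigl(\tau(u+v)+\tau(u-v)\bigr)$ with its consequences---are correct.

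The paper sidesteps all of this by never asking $\pi$ to be injective on the timelike piece. It shows $\Omega_a^\text{max}$ is a graph over the $x_1x_2$-plane (its projection coincides with that of the Schwarz P minimal surface) and lies in the prism by the maximum principle; it shows $\pi\circ\gamma_a$ is a closed \emph{convex} curve, so that $\pi(\Omega_a^\text{min})$, consisting of midpoints of chords, lies inside that curve and the two pieces meet only along the fold arc; and for the embeddedness of $\Omega_a^\text{min}$ it reparametrizes by height so that $x_0\circ\check f_a(\alpha,\beta)=\alpha$, whence any self-intersection forces $\alpha=\alpha'$, and then uses convexity to show that two nested chords of $\pi\circ\gamma_a$ centered at the same $\alpha$ cannot share a midpoint, forcing $\beta=\beta'$. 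To repair your argument, either import this height-plus-convexity mechanism for the timelike part, or actually complete the degree-theoretic argument on $\Omega_a^1\setminus\mathcal{L}_C^\text{min}$ and handle the collapsed segment separately; as written, the decisive step is asserted rather than proved.
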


\begin{figure}
\begin{center}
\begin{tabular}{ccc}
 \includegraphics[width=.40\linewidth]{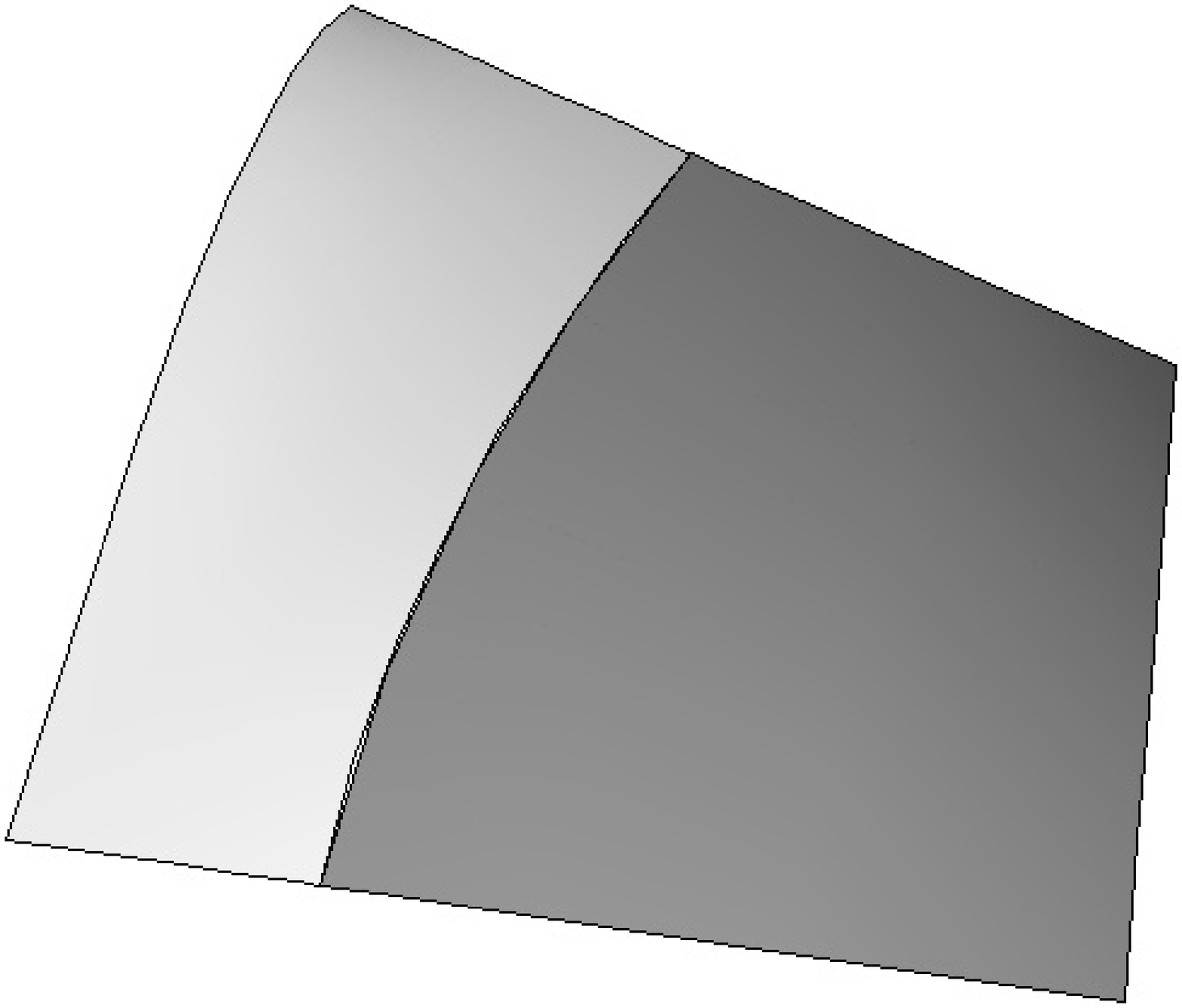} & \hspace*{20pt} &
 \includegraphics[width=.20\linewidth]{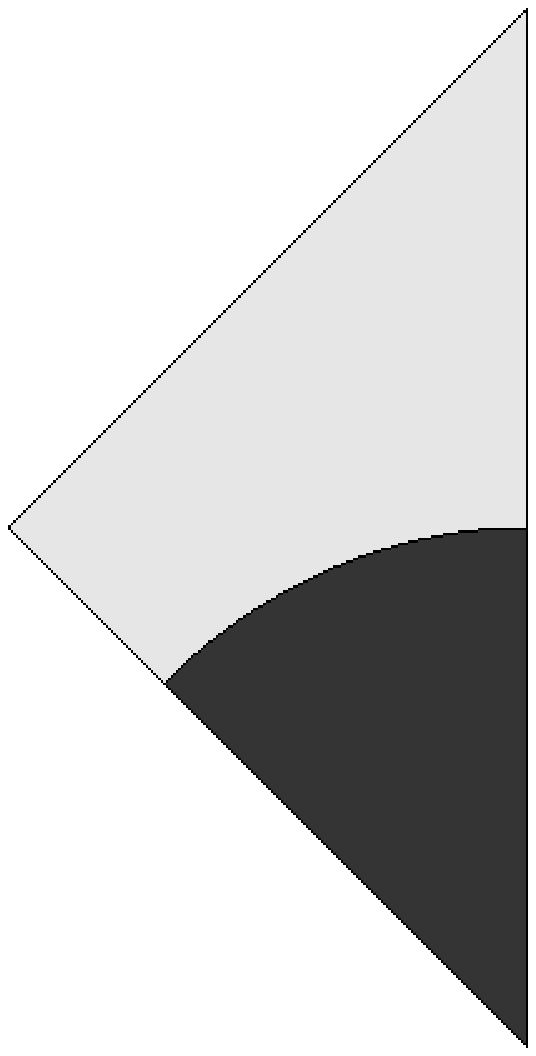} \\
 \unitlength=1pt
 \begin{picture}(100,0)
 \put(80,120){\makebox(0,0)[cc]{$\mathcal{L}_A$}}
 \put(30,20){\makebox(0,0)[cc]{$\mathcal{L}_B$}}
 \put(-20,70){\makebox(0,0)[cc]{$\mathcal{L}_C^\text{max}$}}
 \put(130,60){\makebox(0,0)[cc]{$\mathcal{L}_C^\text{min}$}}
 \put(240,100){\makebox(0,0)[cc]{$\mathcal{L}_A$}}
 \put(180,50){\makebox(0,0)[cc]{$\mathcal{L}_B$}}
 \put(190,130){\makebox(0,0)[cc]{$\mathcal{L}_C^\text{max}$}}
 \put(220,10){\makebox(0,0)[cc]{$\mathcal{L}_C^\text{min}$}}
 \put(170,0){\vector(1,0){15}}
 \put(170,0){\vector(0,1){15}}
 \put(192,0){\makebox(0,0)[cc]{$x_1$}}
 \put(170,20){\makebox(0,0)[cc]{$x_2$}}
 \end{picture} & & 
\end{tabular}
\end{center}
\caption{Left: $\Omega_{a}^1$ defined in \eqref{eq:0mega-ao}.  
The curve in the middle indicates the singular curve $\gamma_a(s)$, 
and the left hand side (resp. right hand side) is $\Omega^\text{max}_{a}$ 
(resp. $\Omega^\text{min}_{a}$).  
Right: Another view of $\Omega_{a}^1$ such that the 
line $\mathcal{L}_C^\text{min}$ is viewed as a single point at the bottom. 
On the top (resp. bottom) is $\Omega^\text{max}_{a}$ (resp. $\Omega^\text{min}_{a}$). 
}
\label{fig:fo1}
\end{figure} 

Now we extend $\Omega_a^1$ by reflection with respect to the planar curve $\mathcal{L}_C^\text{max}$. 
We denote the resulting surface by $\Omega_a^2$, which is two copies of $\Omega_a^1$. 
Then $\Omega_a^2$ is also embedded and the boundary consists of 
five straight line segments
($\mathcal{L}_B$ and its reflection are collinear).  
See Figure \ref{fig:fo2}. 

\begin{figure}
\begin{center}
\begin{tabular}{ccc}
 \includegraphics[width=.35\linewidth]{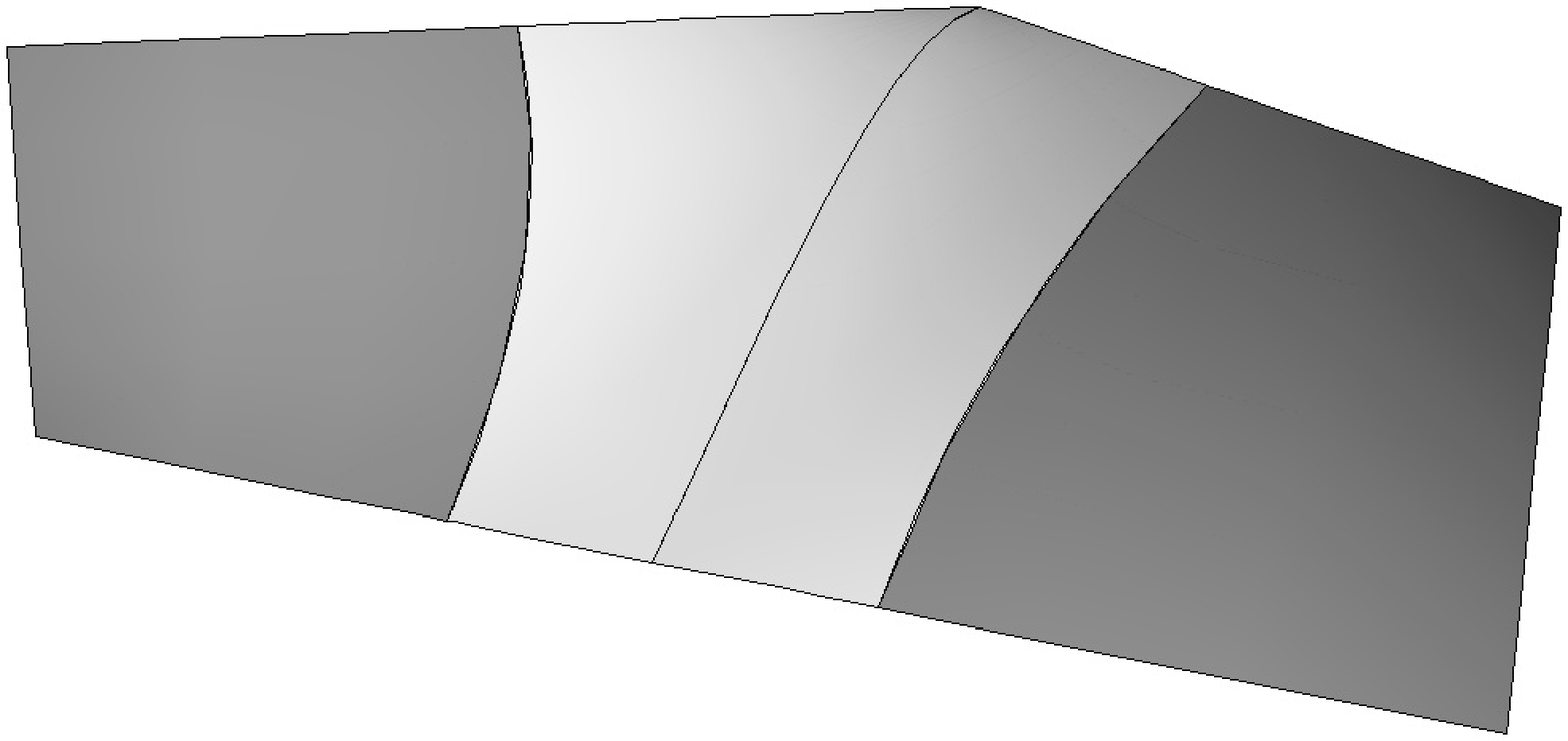} & \hspace*{20pt} &
 \includegraphics[width=.25\linewidth]{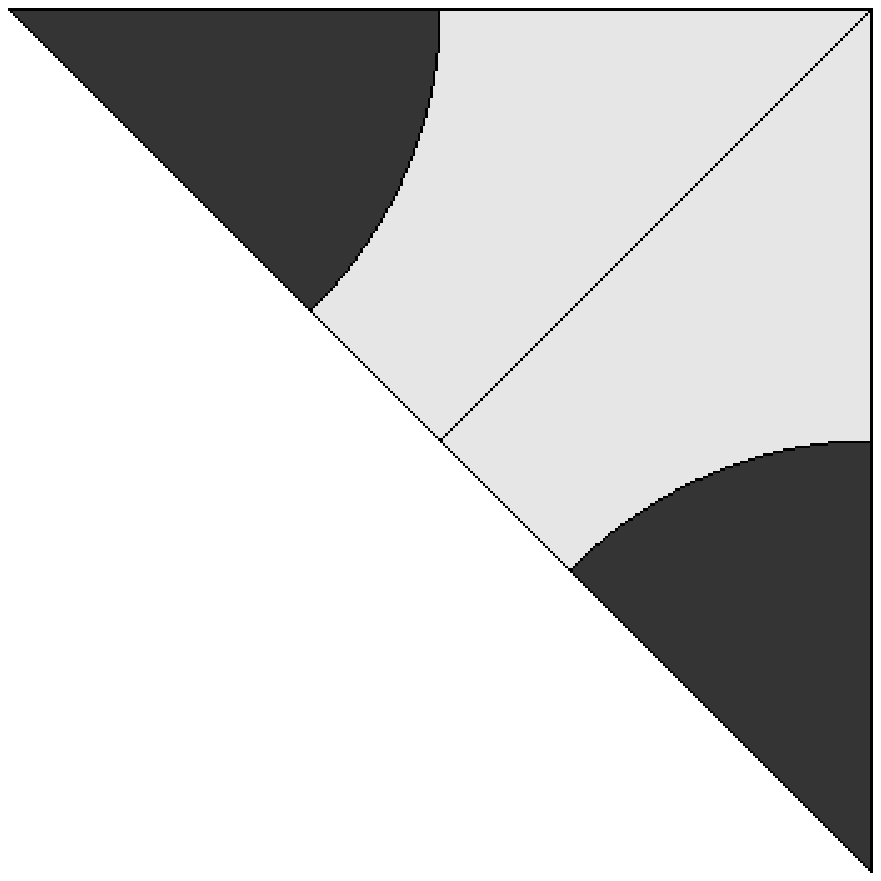} \\
 \unitlength=1pt
 \begin{picture}(100,0)
 \put(90,70){\makebox(0,0)[cc]{$\mathcal{L}_A$}}
 \put(30,80){\makebox(0,0)[cc]{$\mathcal{L}_A'$}}
 \put(50,15){\makebox(0,0)[cc]{$\mathcal{L}_B$}}
 \put(40,50){\makebox(0,0)[cc]{$\mathcal{L}_C^\text{max}$}}
 \put(125,40){\makebox(0,0)[cc]{$\mathcal{L}_C^\text{min}$}}
 \put(250,65){\makebox(0,0)[cc]{$\mathcal{L}_A$}}
 \put(205,110){\makebox(0,0)[cc]{$\mathcal{L}_A'$}}
 \put(190,50){\makebox(0,0)[cc]{$\mathcal{L}_B$}}
 \put(210,80){\makebox(0,0)[cc]{$\mathcal{L}_C^\text{max}$}}
 \put(240,5){\makebox(0,0)[cc]{$\mathcal{L}_C^\text{min}$}}
 \put(170,0){\vector(1,0){15}}
 \put(170,0){\vector(0,1){15}}
 \put(192,0){\makebox(0,0)[cc]{$x_1$}}
 \put(170,20){\makebox(0,0)[cc]{$x_2$}}
 \end{picture} & & 
\end{tabular}
\end{center}
\caption{Left: $\Omega_a^2$, that is, $\Omega_a^1$ and its reflection with respect to 
the plane of $\mathcal{L}_C^\text{max}$.  
The right hand side of $\mathcal{L}_C^\text{max}$ is $\Omega_a^1$ and the left hand side is its reflection. 
The spacelike parts are indicated by grey shades and the timelike parts by black shades. 
Right: Another view of $\Omega_a^2$. 
The right bottom (resp. left top) is $\Omega_a^1$ (resp. its reflection). }
\label{fig:fo2}
\end{figure} 

We denote the reflection of $\mathcal{L}_A$ by $\mathcal{L}_A'$.  

We extend $\Omega_a^2$ by two more reflections with respect to $\mathcal{L}_A$ and $\mathcal{L}_A'$. 
We denote the resulting surface by $\Omega_a^8$, which is four copies of $\Omega_a^2$. 
Then $\Omega_a^8$ is embedded and the boundary consists of eight straight line
segments  
(four (horizontal) spacelike line segments 
and four (vertical) timelike line segments).  
See Figure \ref{fig:fo8}. 

\begin{figure}
\begin{center}
\begin{tabular}{ccc}
 \includegraphics[width=.35\linewidth]{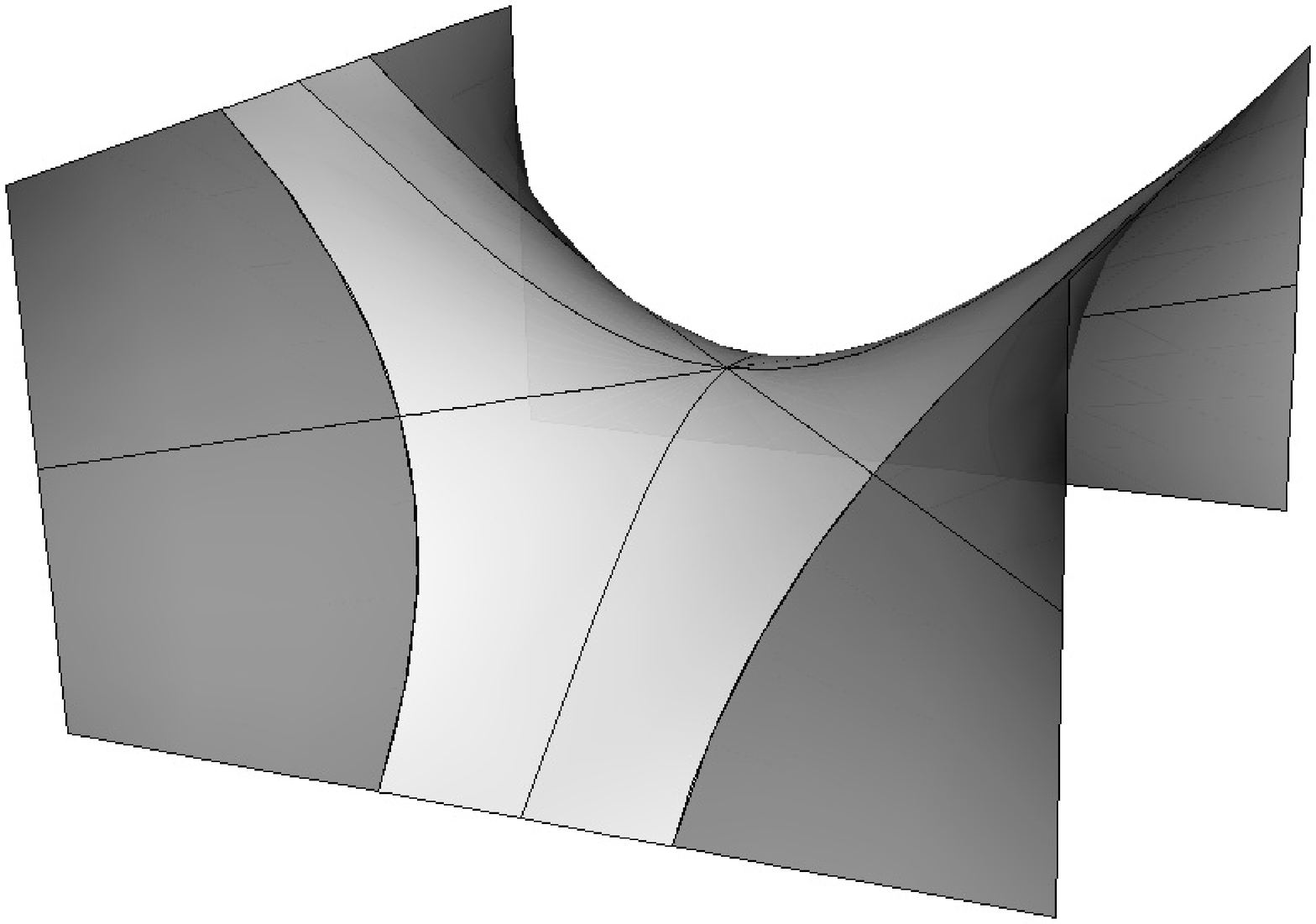} & \hspace*{20pt} &
 \includegraphics[width=.25\linewidth]{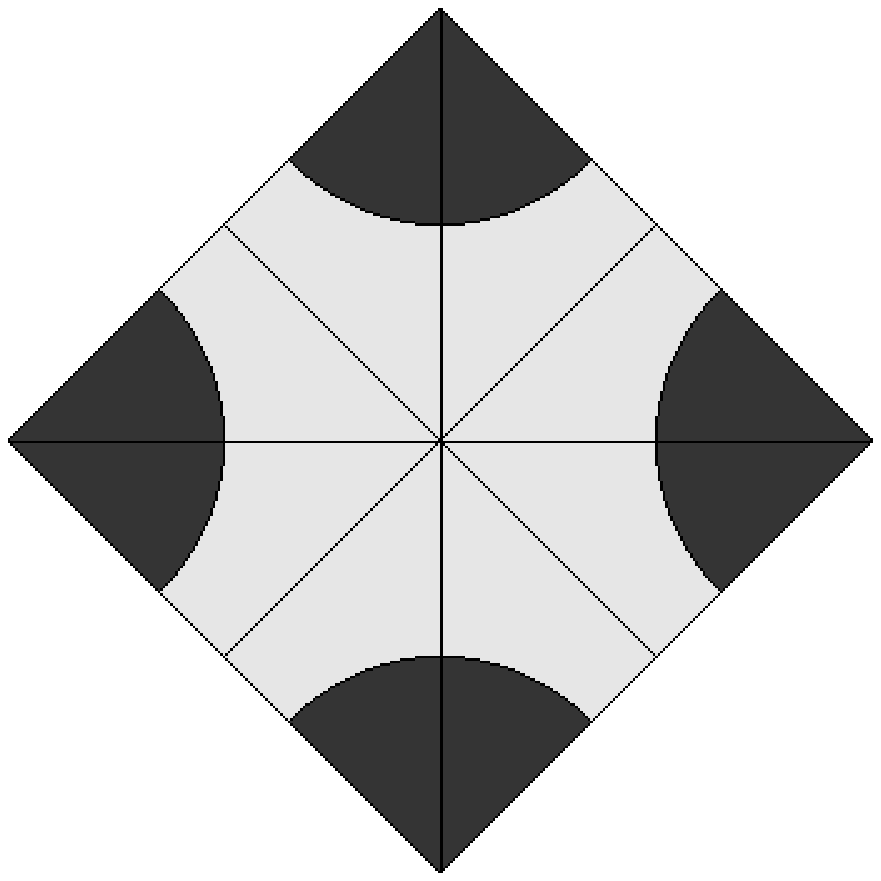} \\
  \unitlength=1pt
 \begin{picture}(100,0)
 \put(160,10){\vector(1,0){15}}
 \put(160,10){\vector(0,1){15}}
 \put(182,10){\makebox(0,0)[cc]{$x_1$}}
 \put(160,30){\makebox(0,0)[cc]{$x_2$}}
 \end{picture} & & 
\end{tabular}
\end{center}
\caption{Left: $\Omega_a^8$, that is, 
$\Omega_a^2$ with its reflections with respect to $\mathcal{L}_A$ and $\mathcal{L}_A'$.  
$\Omega_a^2$ is in the front.  
Right: Another view of $\Omega_a^8$.}
\label{fig:fo8}
\end{figure} 

We now rotate $\Omega_a^8$ with respect to the $x_0$ axis by angle $\pi/4$ 
so that the horizontal lines in the bottom 
(which are indicated by $\mathcal{L}_B$ in Figures~\ref{fig:fo1} and \ref{fig:fo2}) are parallel to the $x_1$ axis. 
Then the boundary $\partial\Omega_a^8$ of $\Omega_a^8$ consists of two (horizontal) 
line segments 
parallel to the $x_1$ axis in the bottom, 
two (horizontal) line segments parallel to the $x_2$ axis in the top, and 
four (vertical) line segments parallel to the $x_0$ axis. 
We label one of the (horizontal) line segments, parallel to the $x_1$ axis in the bottom, 
as $\hat{\mathcal{L}}_B$, 
and one of the (vertical) line segments, which is parallel to the $x_0$ axis and connects to 
$\hat{\mathcal{L}}_B$, as $\hat{\mathcal{L}}_C$. 
See Figure~\ref{fig:fo8'}. 

\begin{figure}
\begin{center}
\begin{tabular}{c}
 \includegraphics[width=.40\linewidth]{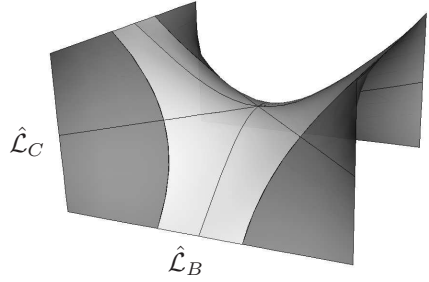} \\
 \unitlength=1pt
 \begin{picture}(100,0)
 \put(30,15){\makebox(0,0)[cc]{$\hat{\mathcal{L}}_B$}}
 \put(-30,60){\makebox(0,0)[cc]{$\hat{\mathcal{L}}_C$}}
 \end{picture} 
\end{tabular}
\end{center}
\caption{$\Omega_a^8$ with labels $\hat{\mathcal{L}}_B$ and $\hat{\mathcal{L}}_C$. }
\label{fig:fo8'}
\end{figure} 

We denote the length of the segment $\hat{\mathcal{L}}_B$ 
(resp. $\hat{\mathcal{L}}_C$) by $|\hat{\mathcal{L}}_B|$ (resp. $|\hat{\mathcal{L}}_C|$). 
We extend $\Omega_a^8$ by two more reflections with respect to $\hat{\mathcal{L}}_B$ and 
$\hat{\mathcal{L}}_C$. 
We denote the resulting surface by $\Omega_a^{32}$, which is four copies of $\Omega_a^8$. 
Then $\Omega_a^{32}$ is still embedded and is contained in the closure of 
a rectangular parallelepiped with height $2|\hat{\mathcal{L}}_C|$ over a square of side length $2|\hat{\mathcal{L}}_B|$. 
See Figure \ref{fig:T}. 

\begin{figure}
\begin{center}
\begin{tabular}{c}
 \includegraphics[width=.50\linewidth]{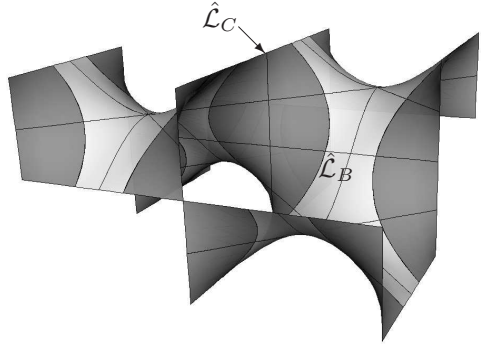} \\
 \unitlength=1pt
 \begin{picture}(100,0)
 \put(48,133){\vector(1,-1){10}}
 \put(85,80){\makebox(0,0)[cc]{$\hat{\mathcal{L}}_B$}}
 \put(41,137){\makebox(0,0)[cc]{$\hat{\mathcal{L}}_C$}}
 \end{picture} 
\end{tabular}
\end{center}
\caption{$\Omega_a^{32}$, that is, 
$\Omega_a^8$ with its reflections with respect to $\hat{\mathcal{L}}_B$ and $\hat{\mathcal{L}}_C$.}
\label{fig:T}
\end{figure} 

Then $\Omega_a^{32}$ and its translation by 
$$
\left(2\epsilon_0 |\hat{\mathcal{L}}_C|,\,
      2\epsilon_1 |\hat{\mathcal{L}}_B|,\,
      2\epsilon_2 |\hat{\mathcal{L}}_B|\right)
      \qquad\text{where}\quad \epsilon_j=\pm 1 \quad (j=0,1,2)
$$
match analytically, since each translation can be obtained by a reflection with respect to 
some straight line. 
Therefore, 
$$
\Omega_a:=
\left\{\Omega_a^{32}+(2m_0|\hat{\mathcal{L}}_C|,\,2m_1|\hat{\mathcal{L}}_B|,\,2m_2|\hat{\mathcal{L}}_B|)
\,\,;\,\, m_0,m_1,m_2\in\Z\right\}
\subset \R^3_1
$$
is an embedded triply periodic surface. 
In other words, $\Omega_a^{32}$ is embedded in a torus $\R^3_1/\Gamma_a$, 
where
$$
\Gamma_a:=\left\{(2m_0|\hat{\mathcal{L}}_C|,\,2m_1|\hat{\mathcal{L}}_B|,\,2m_2|\hat{\mathcal{L}}_B|)
\in\R^3_1\,\,;\,\, m_0,m_1,m_2\in\Z\right\}
$$
is a lattice in $\R^3_1$. 

We clearly see that this surface $\Omega_a$ is topologically the same as 
the Schwarz D minimal surface in $\R^3$  
(see Figure~\ref{fig:mixface}).
Thus $\Omega_a^{32}$ in the quotient $\R^3_1/\Gamma_a$ is a closed orientable 2-manifold of 
genus three. 
\qed

\begin{remark}\label{rm:mix-lim-1}
Here we consider the limit as $a\to 1$.  
In this case, we obtain the zero mean curvature entire graph
$$
  \mathcal S_0=\{(t,x,y)\in \R^3_1\,;\, 
  e^t \cosh x=\cosh y 
\},
$$
which we already mentioned in Remark~\ref{rm:max-lim-1}. 
See Figure \ref{fig:mix-scherk}.
See also Figure \ref{fig:min-scherk} to compare this limiting behavior with that of the minimal surfaces 
in $\R^3$. 

\begin{figure}
\begin{center}
\begin{tabular}{cc}
 \includegraphics[width=.49\linewidth]{surf-t-yb09.eps} & 
 \includegraphics[width=.35\linewidth]{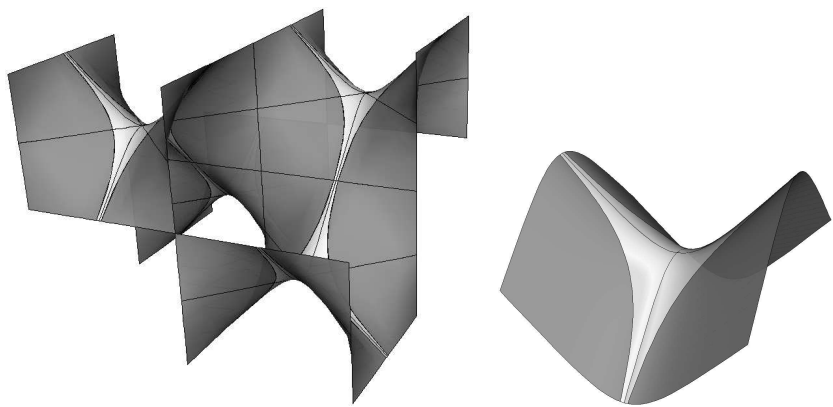}
 \end{tabular}
\end{center}
\caption{$\Omega_a^{32}$ with $a=0.9$ (left) and its limit as $a\to 1$
(right).}
\label{fig:mix-scherk}
\end{figure} 

\begin{figure}
\begin{center}
\begin{tabular}{cc}
 \includegraphics[width=.28\linewidth]{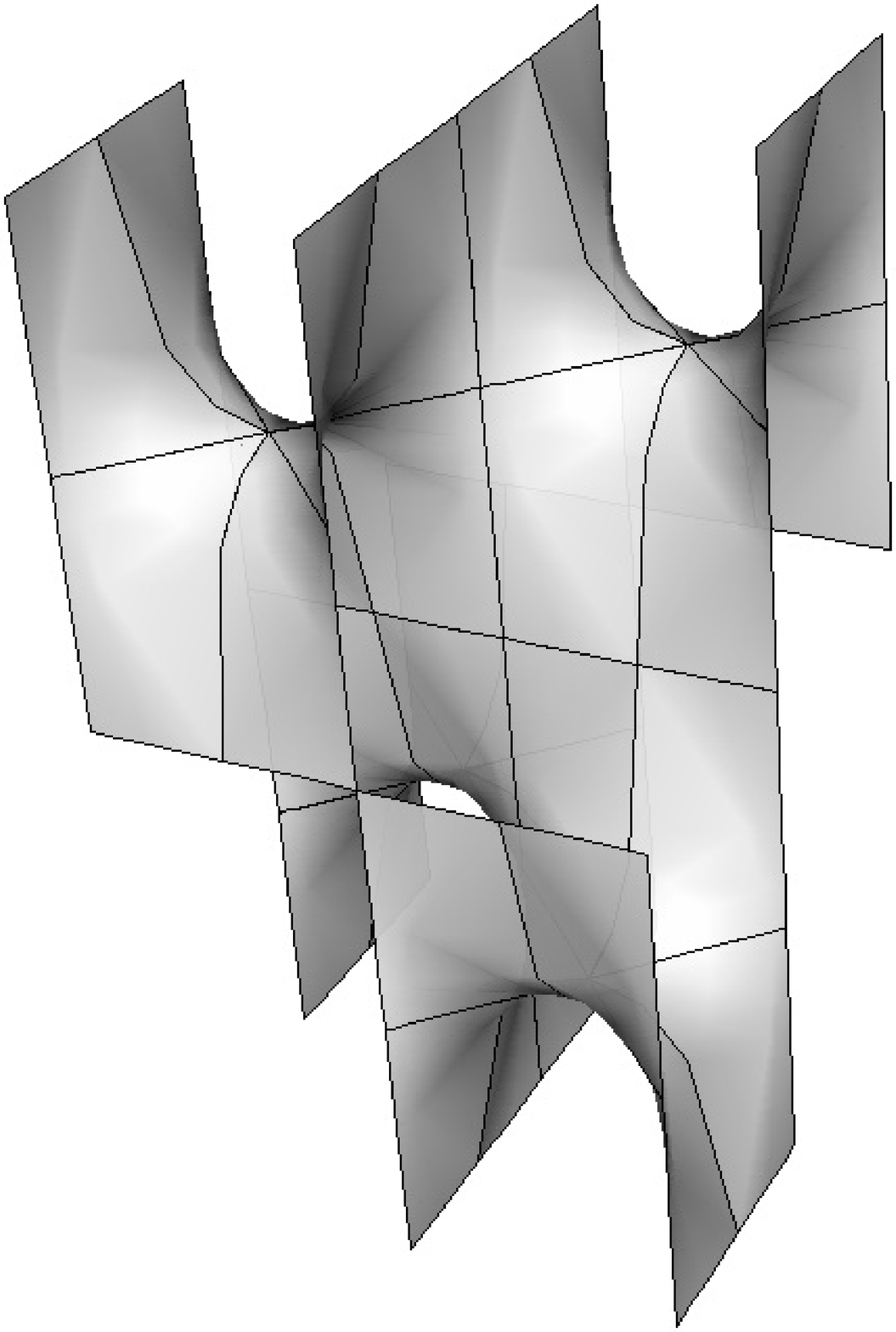} & 
 \includegraphics[width=.45\linewidth]{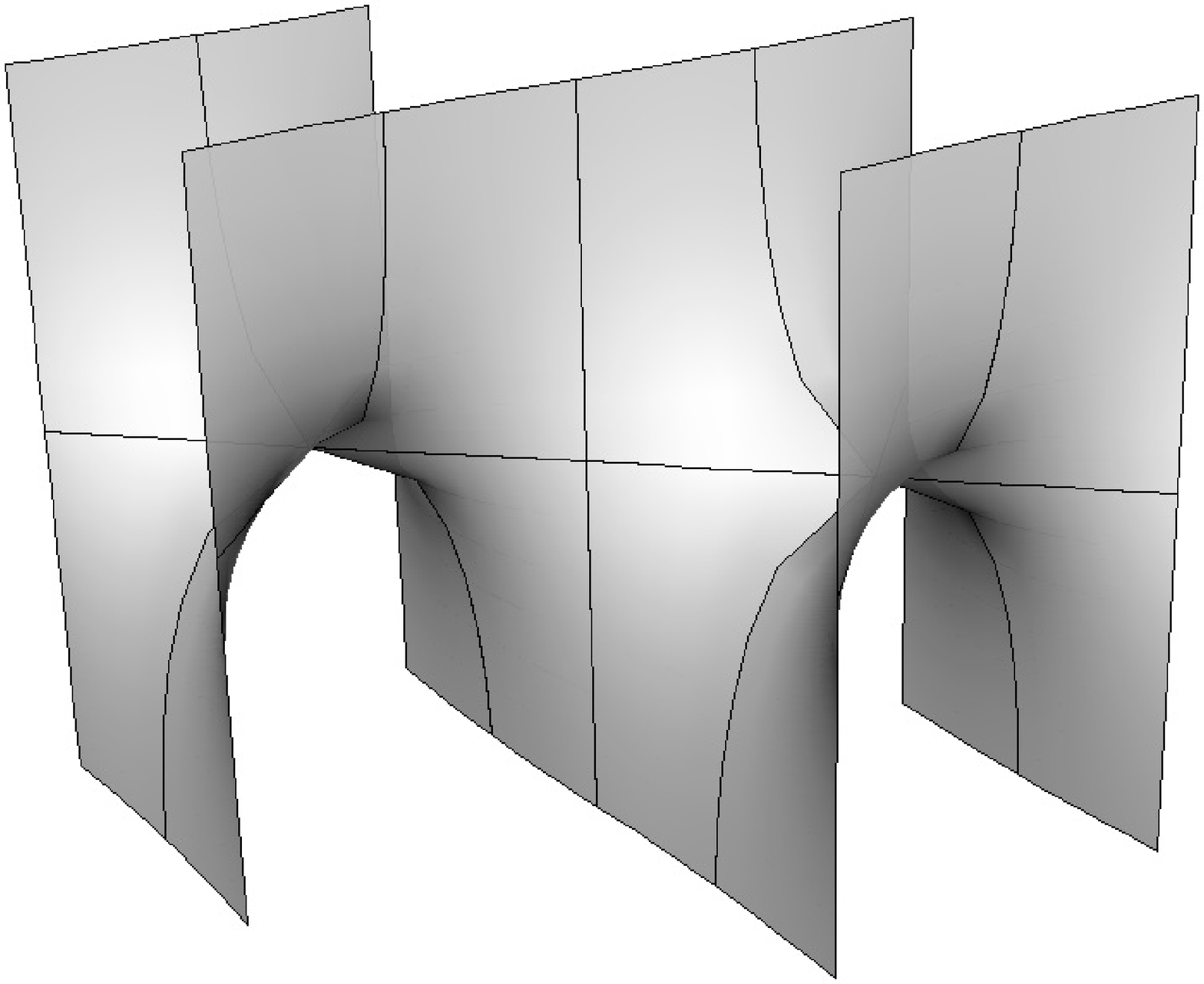}
 \end{tabular}
\end{center}
\caption{Schwarz D surface in $\R^3$ with $a=0.9$ (left) and the doubly periodic Scherk surface 
in $\R^3$ as a limit of Schwarz D surface as $a\to 1$ (right).}
\label{fig:min-scherk}
\end{figure} 
\end{remark}

\begin{remark}\label{rm:mix-lim-0}
Here we consider the  limit as $a\to 0$. We first multiply the surface by $\sqrt{a^4+a^{-4}}$ to rescale the surface, 
as we did in Remark~\ref{rm:max-lim-0},  
and then take the limit as $a\to 0$.  
In this case, we obtain the zero mean curvature surface which is exactly the same as the
minimal helicoid in $\R^3$ \cite{K}. 
See Figure \ref{fig:mix-hel}.
See also Figure \ref{fig:min-hel} to compare the limiting behavior with that of the minimal surfaces in $\R^3$. 

\begin{figure}
\begin{center}
\begin{tabular}{ccc}
 \includegraphics[width=.35\linewidth]{surf-t-yb01.eps} & 
 \includegraphics[width=.35\linewidth]{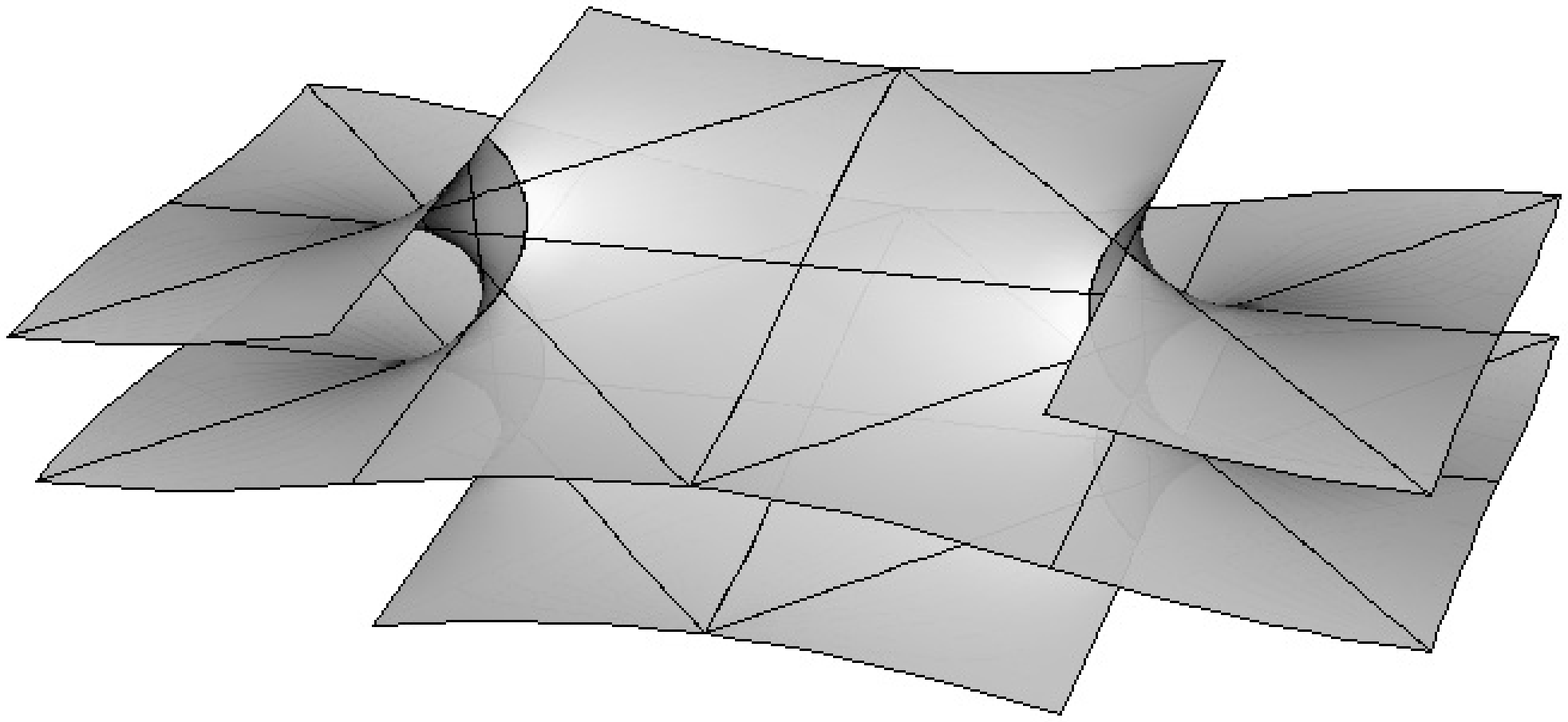} & 
 \includegraphics[width=.25\linewidth]{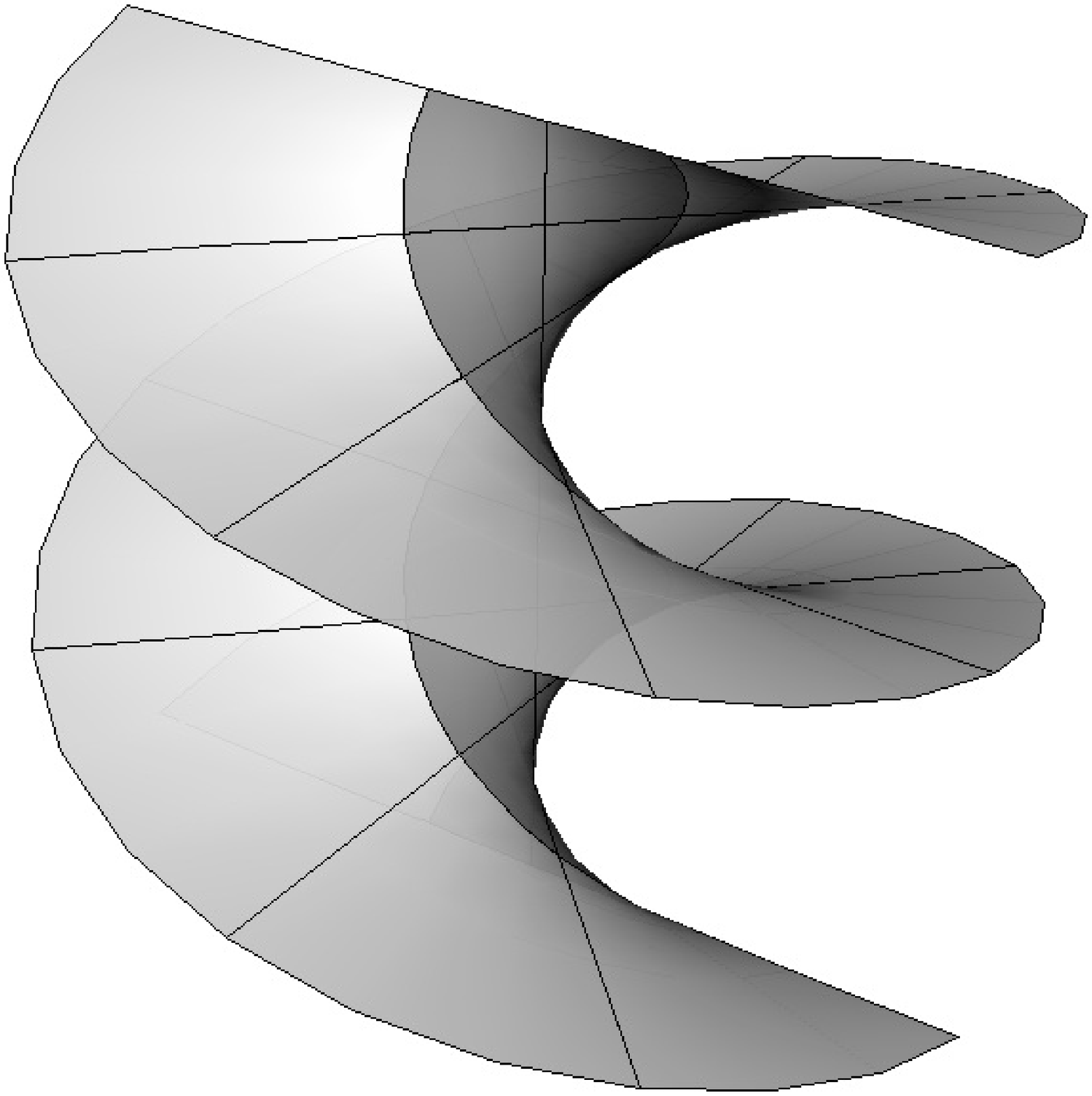} 
 \end{tabular}
\end{center}
\caption{$\Omega_a^{32}$ in $\R_1^3$ with $a=0.1$ (left), 
another view of $\Omega_a^{32}$ with $a=0.1$ (center) and the limit as $a\to 0$ 
(right).}
\label{fig:mix-hel}
\end{figure} 

\begin{figure}
\begin{center}
\begin{tabular}{ccc}
 \includegraphics[width=.35\linewidth]{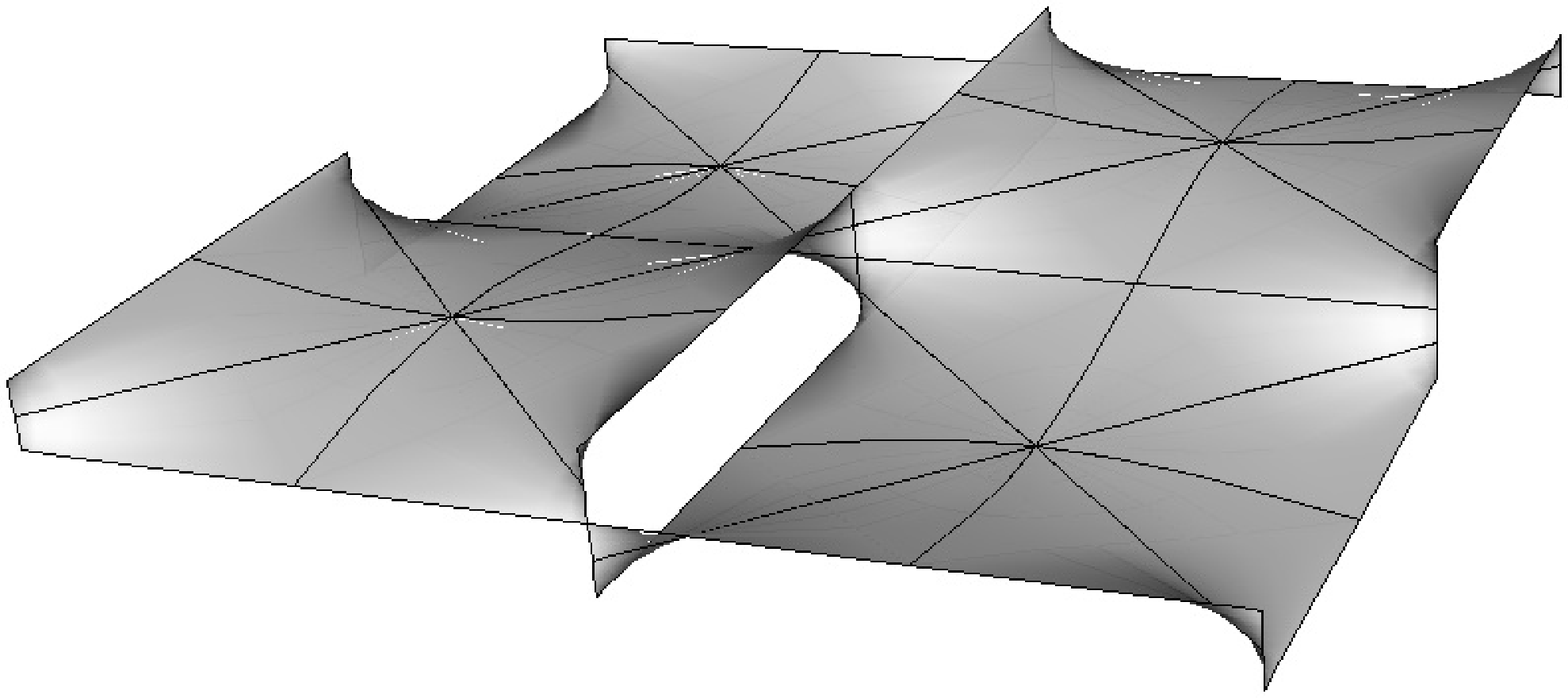} & 
 \includegraphics[width=.35\linewidth]{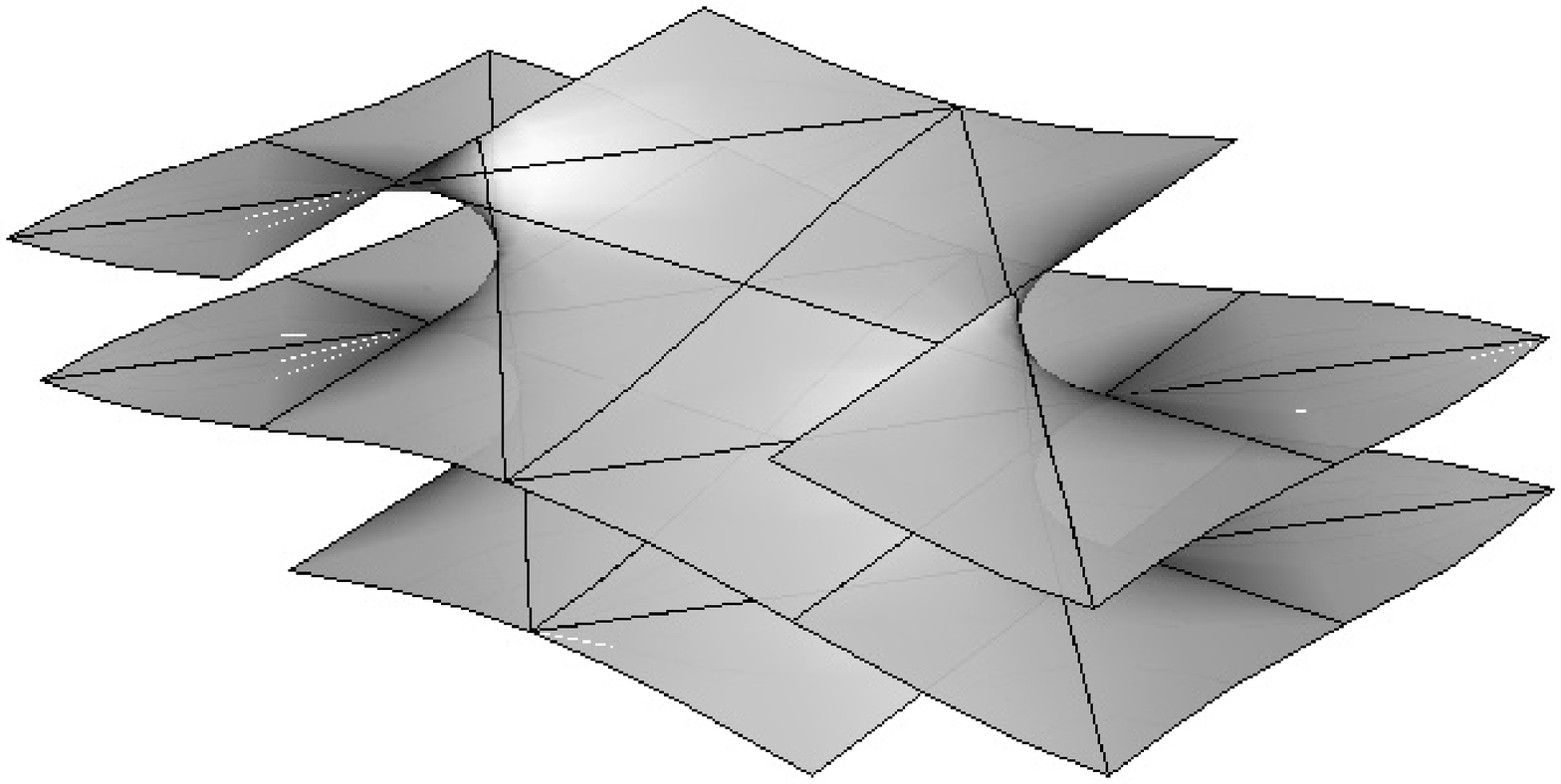} & 
 \includegraphics[width=.25\linewidth]{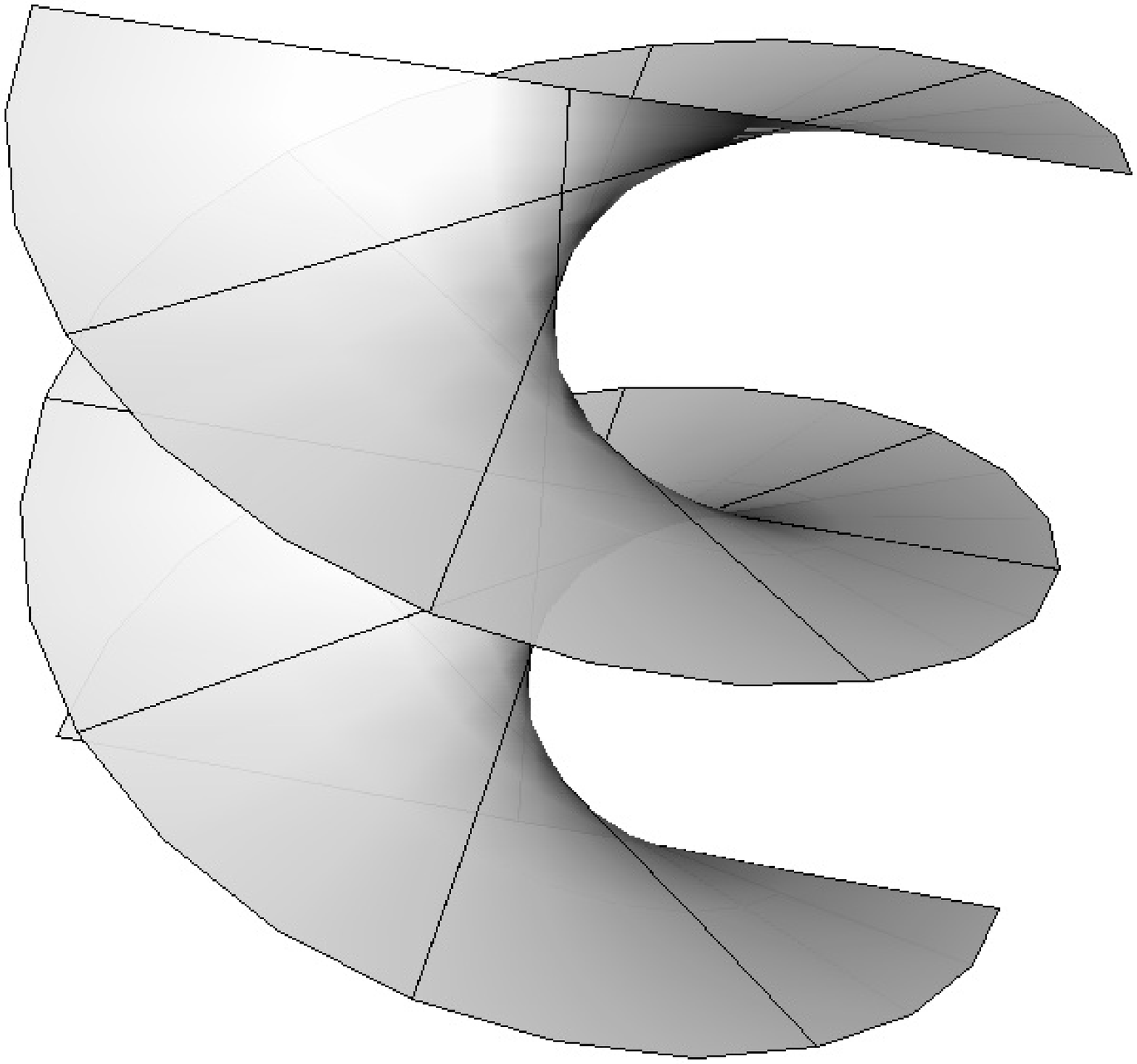} \\
 \end{tabular}
\end{center}
\caption{Schwarz D surface in $\R^3$ with $a=0.1$ (left), 
another view of the Schwarz D surface in $\R^3$ with $a=0.1$ (center) and the helicoid 
in $\R^3$ as a limit as $a\to 0$ with suitable rescaling of Schwarz D surface (right).}
\label{fig:min-hel}
\end{figure} 
\end{remark}

\section{Proof of the embeddedness}
\label{sc:proof-prop}

In this section we present a proof of Proposition~\ref{pr:key-prop}. 

We denote by $\Delta\times\mathcal{L}_C^\text{min}$ the vertical prism over the 
isosceles right triangle $\Delta$ with height $|\mathcal{L}_C^\text{min}|$ 
as in Proportion~\ref{pr:key-prop}. 

First we prepare three lemmas. 

\begin{lemma}
$\Omega_a^\text{max}$ is embedded and contained in 
the closure $\overline{\Delta\times\mathcal{L}_C^\text{min}}$ 
of $\Delta\times\mathcal{L}_C^\text{min}$. 
\end{lemma}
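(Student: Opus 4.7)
The plan is to show that the horizontal projection $\pi(x_0,x_1,x_2):=(x_1,x_2)$ maps $\Omega_a^\text{max}$ diffeomorphically onto a region of $\overline{\Delta}$, and separately that the $x_0$-coordinate stays in $[0,|\mathcal{L}_C^\text{min}|]$. Together these yield embeddedness and the prism containment.

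For the horizontal claim, first compute the Jacobian of $\pi\circ f_a$ on $D_a:=\{z\in\C:0\le|z|<1,\;0\le\arg z\le\pi/4\}$ directly from the Weierstrass data $(G,\eta_{\pi/2})=(z,\,i\,dz/w)$; a short calculation gives
\[
dx_1\wedge dx_2 = -\frac{1-|z|^4}{|w|^2}\,du\wedge dv \qquad (z=u+iv).
\]
The only zero of $w$ on $\overline{D_a}$ is the boundary corner $ae^{i\pi/4}$, so this is strictly negative in the interior of $D_a$; hence $\pi\circ f_a$ is an orientation-reversing local diffeomorphism there. Next, Lemma~\ref{lm:phi1-4} (applied for $\Phi_{\pi/2}=i\Phi_0$) and the stated directions of $\mathcal{L}_A^\text{max},\mathcal{L}_B^\text{max},\mathcal{L}_C^\text{max}$ imply that each maps injectively by $\pi\circ f_a$ onto a segment of one of the three sides of $\Delta$ in directions $(0,1)$, $(1,-1)$, $(1,1)$ respectively. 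Along the singular arc $z=e^{it}$, substituting into $\Phi_{\pi/2}$ yields the real form $\Phi_{\pi/2}|_{|z|=1}=\xi_a(t)(1,-\cos t,-\sin t)\,dt$, so
\[
\pi\circ f_a(e^{it}) = \pi(f_a(1)) - \left(\int_0^t\xi_a(s)\cos s\,ds,\;\int_0^t\xi_a(s)\sin s\,ds\right),
\]
whose coordinates are strictly monotone for $t\in(0,\pi/4)$. Hence this arc traces a simple curve in $\overline{\Delta}$ joining the endpoints of $\pi(\mathcal{L}_A^\text{max})$ and $\pi(\mathcal{L}_B^\text{max})$, and $\pi\circ f_a(\partial D_a)$ is a piecewise-smooth Jordan curve in $\overline{\Delta}$. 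A standard degree argument (a constant-orientation local diffeomorphism with a Jordan boundary image is a global diffeomorphism onto the enclosed region) then proves embeddedness and $\pi(\Omega_a^\text{max})\subset\overline{\Delta}$.

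For the height bound, $x_0$ is harmonic on $D_a$. The $\phi_1$-symmetry forces $x_0\equiv 0$ on $\mathcal{L}_A^\text{max}$; on $\mathcal{L}_B^\text{max}$, direct computation of $\Re[-2iz\,dz/w]$ along the ray $\arg z=\pi/4$, $|z|\in(a,1)$, shows $dx_0=0$ (the relevant integrand is purely imaginary), so $x_0$ is constant there, equal to $|\mathcal{L}_C^\text{min}|$ by continuity with the singular arc at $z=e^{i\pi/4}$; along the singular arc, $x_0=\int_0^t\xi_a(s)\,ds$ increases from $0$ to $|\mathcal{L}_C^\text{min}|$; along $\mathcal{L}_C^\text{max}$, direct computation gives $dx_0>0$ and $x_0$ increases from $0$ to $|\mathcal{L}_C^\text{min}|$. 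The maximum principle then yields $0\le x_0\le|\mathcal{L}_C^\text{min}|$ on $\Omega_a^\text{max}$. The main obstacle I foresee is the boundary analysis — verifying that the four boundary arcs combine into a genuine simple closed curve lying in $\overline{\Delta}$, in particular that the projected singular curve stays inside $\overline{\Delta}$ and not merely between its endpoints on the extended sides. Given this, the conclusion is a routine combination of the Jacobian sign, the symmetries of the Weierstrass data, and the maximum principle.
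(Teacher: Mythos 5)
Your proposal is correct in substance but takes a genuinely more self-contained route than the paper. The paper's proof is two lines: it observes that the $x_1x_2$-projection of $f_{a,\pi/2}$, namely $\Re\int\bigl(i(1+z^2),-(1-z^2)\bigr)dz/w$, coincides up to a rotation of the plane with the horizontal projection of the Schwarz P minimal surface \eqref{eq:min-r3}, quotes the known fact (cf.\ \cite{R}) that the corresponding fundamental piece of the Schwarz P surface is a graph over the $x_1x_2$-plane, and then gets the prism containment from the maximum principle applied to the boundary --- essentially your height argument, applied to all the defining linear functions of the prism at once. You instead re-derive the graph property from scratch: your Jacobian identity $dx_1\wedge dx_2=-(1-|z|^4)|w|^{-2}\,du\wedge dv$ is correct (it follows from $\Re(h_1dz)\wedge\Re(h_2dz)=\Im(h_1\bar h_2)\,du\wedge dv$ and $\Im\bigl(-i(1+z^2)(1-\bar z^2)\bigr)=-(1-|z|^4)$), and together with the boundary/degree argument this buys independence from the Euclidean literature and makes visible exactly where spacelikeness ($|z|<1$) enters. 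The price is the boundary analysis you flag yourself: you must verify that the four projected boundary arcs form a Jordan curve lying in $\overline{\Delta}$, in particular that the projected singular arc does not leave $\overline{\Delta}$. Be aware that this step is equally present, though unspoken, in the paper's proof (the assertion that $\partial\Omega_a^{\text{max}}$ is contained in $\overline{\Delta\times\mathcal{L}_C^{\text{min}}}$ already requires it); it can be closed by combining your monotonicity computation with Lemma~\ref{lm:convex}, since a convex arc whose endpoints lie on two sides of $\Delta$ and whose endpoint tangent directions $-(\cos t,\sin t)$, $t=0,\pi/4$, point into the triangle cannot exit $\overline{\Delta}$. With that point settled, your argument is a complete and valid alternative to the paper's.
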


\begin{proof}
The projection of $\Omega_a^\text{max}$ into the $x_1x_2$-plane is represented by 
$$
 \Re\int \left(1+z^2,\,i(1-z^2)\right) i\frac{dz}{w}
=\Re\int \left(i(1+z^2),\,-(1-z^2)\right) \frac{dz}{w},
$$
which is the same as the projection of the Schwarz P minimal surface in $\R^3$ 
(see \eqref{eq:min-r3}). 
So $\Omega_a^\text{max}$ is a graph over $x_1x_2$-plane. 
Since the boundary $\partial\Omega_a^\text{max}$ of $\Omega_a^\text{max}$ is contained in 
$\overline{\Delta\times\mathcal{L}_C^\text{min}}$, $\Omega_a^\text{max}$ itself is contained in 
$\overline{\Delta\times\mathcal{L}_C^\text{min}}$ as well, by the maximum principle. 
Thus $\Omega_a^\text{max}$ is embedded 
and contained in $\overline{\Delta\times\mathcal{L}_C^\text{min}}$. 
\end{proof}

\begin{lemma}\label{lm:convex}
The projection of the singular curve $\gamma_a (s)$ $(0\leq s\leq 2\pi)$
in \eqref{eq:gamma}
into $x_1x_2$-plane is 
a closed convex curve. 
\end{lemma}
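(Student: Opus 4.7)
The plan is to parametrize the projected curve explicitly, show that its unit tangent vector winds monotonically once around the circle as $s$ ranges over $[0,2\pi]$, and separately verify that the curve closes up, using the $\pi/2$-periodicity of the weight function $\xi_a$.

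First I would let $\beta_a(s) := (\beta_1(s),\beta_2(s))$ denote the projection of $\gamma_a(s)$ to the $x_1x_2$-plane, so from \eqref{eq:gamma} we get
\[
\beta_a'(s) = -\xi_a(s)\,(\cos s,\sin s).
\]
Since $a\in(0,1)$ gives $a^4+a^{-4}>2$, the radicand $2\cos 4t + a^4+a^{-4}$ is strictly positive, hence $\xi_a(s)>0$ for all $s\in\R$. Therefore the unit tangent is $-(\cos s,\sin s)$, whose argument $s+\pi$ is a strictly monotone function of $s$ that sweeps out exactly one full revolution as $s$ traverses $[0,2\pi]$. This already gives local convexity and the correct total turning.

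Next I would verify closure, i.e., $\beta_a(2\pi)=\beta_a(0)=(0,0)$, by showing
\[
\int_0^{2\pi}\xi_a(t)\cos t\,dt = \int_0^{2\pi}\xi_a(t)\sin t\,dt = 0.
\]
The key observation is that $\xi_a$ has period $\pi/2$, since $\cos 4t$ does. Splitting $[0,2\pi]$ into the four intervals $[k\pi/2,(k+1)\pi/2]$ for $k=0,1,2,3$ and substituting $t=u+k\pi/2$ in each piece, each integral reduces to a sum of four integrals over $[0,\pi/2]$ against $\xi_a(u)$ multiplied respectively by $\cos u, -\sin u, -\cos u, \sin u$ (for the first integral) and by $\sin u, \cos u, -\sin u, -\cos u$ (for the second). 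Both sums vanish identically, so $\beta_a(2\pi)=\beta_a(0)$.

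Finally I would combine these: a $C^1$ closed plane curve whose tangent direction is a strictly monotone function of the parameter making exactly one complete rotation bounds a convex region (this is a standard characterization of convexity for closed curves of turning number one; see e.g. do Carmo). Thus the projection is a closed convex curve. The only nonobvious step is the closure identity, but it follows cleanly from the $\pi/2$-periodicity of $\xi_a$; the convexity itself is essentially immediate once the tangent direction is written down.
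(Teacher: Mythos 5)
Your proof is correct and follows essentially the same route as the paper's: both hinge on the observation that the projected tangent vector is $-\xi_a(s)(\cos s,\sin s)$ with $\xi_a>0$, so the tangent direction turns strictly monotonically through one full revolution --- the paper phrases this as the signed curvature being positive. Your explicit verification of closure via the $\pi/2$-periodicity of $\xi_a$, and of the rotation index being one, simply fills in details the paper dismisses as trivial.
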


\begin{proof}
Let $(x_1(s),x_2(s))$ be the projection of $\gamma_a(s)$ 
into $x_1x_2$-plane. 
It is trivial to see that $(x_1(s),x_2(s))$ is a closed
$C^{\infty}$-regular curve. 
Now we compute the curvature $\kappa_a(s)$ of $(x_1(s),x_2(s))$ and see that 
\begin{equation*}
	\kappa_a(s) = \frac{ \dot{x}_1(s) \ddot{x}_2(s) - \dot{x}_2(s) \ddot{x}_1(s) }
		{ ( \dot{x}_1^2(s) + \dot{x}_2^2(s) )^{3/2}}
		= \sqrt{ \xi_a(s)} >0.
\end{equation*}
So, $(x_1(s),x_2(s))$ is a convex curve.
\end{proof}

\begin{lemma}
$\overline{\Omega_a^\text{max}}\cap\overline{\Omega_a^\text{min}}
=\{\gamma_a(s)\,;\,0\le s\le \pi/4\}$, where $\overline{\Omega_a^\text{max}}$ 
{\rm(}resp. $\overline{\Omega_a^\text{\rm min}}${\rm)} 
 is the closure of $\Omega_a^\text{max}$ 
{\rm(}resp. $\Omega_a^\text{\rm min}${\rm)}. 
\end{lemma}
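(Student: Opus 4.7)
The inclusion $\supseteq$ is immediate: by~\eqref{eq:fold}, $\gamma_a(s)=\tilde{f}_a(s,0)\in\overline{\Omega_a^\text{min}}$, and $\gamma_a(s)$ is a singular point of $f_a$, hence in $\overline{\Omega_a^\text{max}}$.

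For the reverse inclusion, I would project to the $x_1x_2$-plane via $\pi(x_0,x_1,x_2):=(x_1,x_2)$ and set $\Psi:=\pi\circ\gamma_a$; by Lemma~\ref{lm:convex}, $\Psi$ is a strictly convex closed curve bounding a convex region $K$. By the preceding lemma, $\Omega_a^\text{max}$ is a graph over its $x_1x_2$-projection, call it $\Delta^\text{max}$, whose boundary consists of $\pi(\mathcal{L}_A^\text{max})$, $\pi(\mathcal{L}_B^\text{max})$, $\pi(\mathcal{L}_C^\text{max})$ and the convex arc $\Psi([0,\pi/4])$. Let $\Delta^\text{min}:=\Delta\setminus\overline{\Delta^\text{max}}$ be the complementary component of $\Delta$ across $\Psi([0,\pi/4])$; using Lemma~\ref{lm:st-lines} one verifies that its remaining boundary is precisely $\pi(\mathcal{L}_A^\text{min})\cup\pi(\mathcal{L}_B^\text{min})\cup\{\pi(\mathcal{L}_C^\text{min})\}$, and that it lies on the concave side of $\Psi([0,\pi/4])$ (the side that, within $\Delta$, contains the right-angled vertex $\pi(\mathcal{L}_C^\text{min})$).

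The central step is the inclusion $\pi(\overline{\Omega_a^\text{min}})\subseteq\overline{\Delta^\text{min}}$. From $\tilde{f}_a(u,v)=\tfrac{1}{2}(\gamma_a(u+v)+\gamma_a(u-v))$ we obtain
\[
\pi(\tilde{f}_a(u,v))=\tfrac{1}{2}\bigl(\Psi(u+v)+\Psi(u-v)\bigr),
\]
which is the midpoint of a chord of $\Psi$, hence a point of $\overline{K}$. A Taylor expansion at $v=0$ gives $\pi(\tilde{f}_a(u,v))=\Psi(u)+(v^2/2)\Psi''(u)+O(v^4)$, where $\Psi''(u)$ points into the concave side of $\Psi$ by Lemma~\ref{lm:convex}; thus for small $v>0$ the image enters $\Delta^\text{min}$. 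Combined with the boundary data above, a continuity argument on the connected parameter domain $[0,\pi/4]\times(0,\pi/2]$ forces $\pi(\overline{\Omega_a^\text{min}})\subseteq\overline{\Delta^\text{min}}$.

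Once this inclusion is in hand, any $p\in\overline{\Omega_a^\text{max}}\cap\overline{\Omega_a^\text{min}}$ satisfies $\pi(p)\in\overline{\Delta^\text{max}}\cap\overline{\Delta^\text{min}}=\Psi([0,\pi/4])$. Since $\Omega_a^\text{max}$ is a graph over $\Delta^\text{max}$ and $\Psi|_{[0,\pi/4]}$ is injective by strict convexity, $p$ is uniquely determined by $\pi(p)$ and must therefore equal $\gamma_a(s)$ for some $s\in[0,\pi/4]$. The main obstacle is the global step $\pi(\overline{\Omega_a^\text{min}})\subseteq\overline{\Delta^\text{min}}$: the midpoint-of-chord formula places every image in $\overline{K}$, and the Taylor argument controls behaviour only near $v=0$, so propagating the inclusion to all $v\in(0,\pi/2]$—ruling out the projection swinging back into $\Delta^\text{max}$—relies on the boundary data and a topological/connectedness argument rather than on the infinitesimal estimate alone.
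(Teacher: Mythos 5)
Your proof takes essentially the same route as the paper's: the paper's entire argument consists of your central observation that the projection of $\tilde f_a(u,v)$ into the $x_1x_2$-plane is the midpoint of a chord of the convex curve $\pi\circ\gamma_a$ (Lemma~\ref{lm:convex}) and hence lies inside it, followed by ``the claim follows.'' The global step you flag as the main obstacle --- confining the projection of $\Omega_a^\text{min}$ to the concave side so that it meets $\overline{\Delta^\text{max}}$ only along $\Psi([0,\pi/4])$ --- is precisely what the paper leaves implicit there, so your Taylor expansion and connectedness elaboration is added detail rather than a genuinely different approach.
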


\begin{proof}
We note that $\tilde f_a(u,v)$ is the midpoint of $\gamma_a(u+v)$ and $\gamma_a(u-v)$.
Therefore, the projection of $\tilde f_a(u,v)$ into $x_1x_2$-plane is the midpoint of 
the projections of $\gamma_a(u+v)$ and $\gamma_a(u-v)$ into $x_1x_2$-plane.
Hence, the projection of $\tilde f_a(u,v)$ into $x_1x_2$-plane is inside the convex curve. 
The claim follows. 
\end{proof}

Thus, to prove Proposition~\ref{pr:key-prop}, it suffices to show that 
$\Omega_a^\text{min}$ is embedded and contained in 
the closure $\overline{\Delta\times\mathcal{L}_C^\text{min}}$ 
of $\Delta\times\mathcal{L}_C^\text{min}$, 
by the above three lemmas. 
To do this, we reparametrize $\gamma_a$ as well as $\tilde f_a$ by their height as follows:

We define a diffeomorphism $\tau :\R\to\R$ by 
$$
\tau (s):=\int_0^s\xi_a(t)dt. 
$$
$\tau (s)$ is the height function (i.e. the $x_0$-component) of $\gamma_a(s)$. 
Using the inverse function $s=s(\tau)$ of $\tau (s)$, we define a parameter change 
$$
\tilde{\gamma}_a(\tau):=\gamma_a(s(\tau)). 
$$
of $\gamma_a(s)$.  
We also define $(\alpha,\beta) :\R^2\to\R^2$ by
$$
(\alpha ,\beta)
=\big(\alpha (u,v), \beta (u,v)\big)
:=\left( \frac{\tau(u+v)+\tau(u-v)}{2} ,\, \frac{\tau(u+v)-\tau(u-v)}{2}\right), 
$$
and 
$$
\check{f}_a(\alpha,\beta)
:=\frac{1}{2}\left(\tilde{\gamma}_a(\alpha +\beta)+\tilde{\gamma}_a(\alpha -\beta)\right).
$$
Since $\tilde{\gamma}_a(\alpha\pm\beta)=\tilde{\gamma}_a(\tau(u\pm v))=\gamma_a(u\pm v)$, 
we see that $\check{f}_a(\alpha,\beta)$ and $\tilde{f}_a(u,v)$ give the same surface 
(see \cite[Proposition 2.2]{CR2}). 
We set 
$$
c_a:=\tau (\pi) =\int_0^\pi\xi_a(t)dt.
$$

\begin{lemma}\label{Lem:20120715_1}
Consider the map $\psi : \R\times (0,\pi)\ni (u,v) \mapsto (\alpha,\beta)\in \R\times (0,c_a)$. 
Then, $\psi$ is a diffeomorphism and the image of the rectangle
$$ 
0 \le u \le \pi/4, \qquad 0 \le v \le \pi/2 
$$ 
is again a rectangle, which is given by 
\begin{equation*}
	0 \le \alpha \le \tau(\pi/4),  \qquad
	0 \le \beta  \le \tau(\pi/2)=c_a/2.
\end{equation*}
\end{lemma}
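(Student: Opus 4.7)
The plan is to factor $\psi$ as a composition of three elementary maps, verify the diffeomorphism properties from this factorization, and then compute the image of the specific rectangle by evaluating $\alpha$ and $\beta$ on each of its four edges using the symmetries of $\xi_a$.

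Concretely, I would write $\psi=S\circ T\circ R$, where $R(u,v):=(u+v,u-v)$ and $S(P,Q):=((P+Q)/2,(P-Q)/2)$ are linear automorphisms of $\R^2$, and $T(p,q):=(\tau(p),\tau(q))$ is a product diffeomorphism because $\tau'=\xi_a>0$ makes $\tau:\R\to\R$ a strictly increasing smooth bijection. A direct computation gives $\det d\psi_{(u,v)}=\xi_a(u+v)\,\xi_a(u-v)>0$, so $\psi$ is a local diffeomorphism everywhere, and bijectivity onto $\R\times(0,c_a)$ follows by tracking the three factors in turn. First, $R$ sends $\R\times(0,\pi)$ onto the open strip $\{(p,q):0<p-q<2\pi\}$. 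The global fact needed for $T$ is the periodicity $\xi_a(t+\pi/2)=\xi_a(t)$ (immediate from $\cos 4(t+\pi/2)=\cos 4t$), which yields $\tau(s+\pi)=\tau(s)+c_a$; hence for $p>q$ with $p-q<2\pi$ one has $0<\tau(p)-\tau(q)=\int_q^p\xi_a<2c_a$, and conversely any $(P,Q)$ with $0<P-Q<2c_a$ lifts via $\tau^{-1}$ to a unique $(p,q)$ with $0<p-q<2\pi$, since $p-q\ge 2\pi$ would otherwise force $P-Q\ge 2c_a$. Thus $T$ is a diffeomorphism onto $\{0<P-Q<2c_a\}$, and $S$ identifies that strip with $\R\times(0,c_a)$.

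For the image of the rectangle $[0,\pi/4]\times[0,\pi/2]$, I would evaluate $\alpha$ and $\beta$ on each of the four edges. Since $\xi_a$ is even, $\tau$ is odd, so $u=0$ gives $\alpha=0$, and $v=0$ trivially gives $\beta=0$. For the edge $u=\pi/4$, I would use the reflection symmetry $\xi_a(\pi/4+s)=\xi_a(\pi/4-s)$ (coming from $\cos 4(\pi/4\pm s)=-\cos 4s$), which yields $\tau(\pi/4+v)+\tau(\pi/4-v)=2\tau(\pi/4)$, hence $\alpha=\tau(\pi/4)$. For the edge $v=\pi/2$, the interval $[u-\pi/2,u+\pi/2]$ has length $\pi$, which is exactly two periods of $\xi_a$, so
\[
\beta=\frac{1}{2}\int_{u-\pi/2}^{u+\pi/2}\xi_a(t)\,dt=\frac{1}{2}\int_0^\pi\xi_a(t)\,dt=\frac{c_a}{2}=\tau(\pi/2).
\]
Since $\psi$ is a global diffeomorphism sending the four boundary edges bijectively onto the four edges of the target rectangle, the closed source rectangle maps onto the closed target rectangle.

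The main delicacy will be the combined use of the evenness of $\xi_a$, its reflection symmetry about $t=\pi/4$, and its $\pi/2$-periodicity, since these identities are precisely what forces the image of the boundary edges to be constant coordinate lines, and hence the image region to be a coordinate rectangle rather than merely some parallelogram-shaped region. Everything else---smoothness, invertibility, and the Jacobian computation---is essentially automatic once the factorization $\psi=S\circ T\circ R$ is in place.
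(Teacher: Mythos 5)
Your proof is correct and takes essentially the same route as the paper's: the positive Jacobian $\xi_a(u+v)\,\xi_a(u-v)$, the inverse built from $\tau^{-1}$ in the null coordinates $u\pm v$ (which your factorization $\psi=S\circ T\circ R$ merely repackages), and the evaluation of $\alpha$ and $\beta$ on the four edges via the evenness, $\pi/2$-periodicity, and reflection symmetry of $\xi_a$ are all exactly the ingredients of the paper's argument. If anything, your verification that $\psi$ is onto $\R\times(0,c_a)$ using $\tau(s+\pi)=\tau(s)+c_a$ is slightly more explicit than the paper's, which simply exhibits the inverse formula and asserts it works.
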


\begin{proof}
It is easy to see that 
$$
(u,v)=\left( \frac{\tau^{-1}(\alpha + \beta)+\tau^{-1}(\alpha - \beta)}{2} ,\, 
            \frac{\tau^{-1}(\alpha + \beta)-\tau^{-1}(\alpha - \beta)}{2}\right)
$$
gives the inverse function for  $\psi$. 
Since the Jacobian 
$$
\det\frac{\partial (\alpha ,\beta)}{\partial (u,v)}=\xi_a(u+v)\xi_a(u-v)
$$
is always positive, $\psi$ is a diffeomorphism. 

Furthermore, we see that for any $n\in\Z$, 
\begin{align*}
\beta(u,v) &= \frac{1}{2} ( \tau(u+v) - \tau(u-v)) = \frac{1}{2} \int_{u-v}^{u+v} \xi_a(t) dt, \\
\beta(u,n\pi/2) &= \frac{1}{2} \int_{u-n\pi/2}^{u+n\pi/2} \xi_a(t) dt 
= \frac{1}{2} \int_{0}^{n\pi} \xi_a(t) dt =n\tau(\pi/2) \\
&\quad\text{(since $\xi_a(t)$ is periodic with period $\pi/2$)}, \\
\alpha(0,v) &= \frac{1}{2} \Big(\int_{0}^{v} \xi_a(t)dt + \int_{0}^{-v} \xi_a(t)dt \Big) =0
\quad\text{(since $\xi_a(t)$ is even)}, \\
\alpha(\pi/4,v) 
&= \frac{1}{2} \left(\int_{0}^{\pi/4+v} \xi_a(t)dt + \int_{0}^{\pi/4-v} \xi_a(t)dt \right) \\
&= \frac{1}{2} \left( \int_0^{\pi/4} + \int_{\pi/4}^{\pi/4+v} + \int_0^{\pi/4} + \int_{\pi/4}^{\pi/4-v} \right)  \xi_a(t)\,dt\\
& = \int_0^{\pi/4} \xi_a(t)dt=\tau(\pi/4), 
\end{align*}
from which the rest of the claim follows.
\end{proof}

By this lemma, we have that 
$$
\Omega^\text{min}_{a}
=\{\check{f}_a(\alpha,\beta)\in\R^3_1\,;\,0\le \alpha\le\tau(\pi/4),\,0<\beta\le\tau(\pi/2)\}.
$$

\begin{remark}\label{re:height-alpha}
The $x_0$-component of $\check{f}_a(\alpha, \beta)$ is $\alpha$, that is, 
\begin{equation*}
	x_0 \circ \check{f}_a(\alpha,\beta) 
	= \frac{1}{2} \left( \int_{0}^{\alpha+\beta}dt + \int_{0}^{\alpha-\beta} dt \right) 
	= \alpha.
\end{equation*}
\end{remark}

Now we prove the embeddedness of $\Omega^\text{min}_{a}$. 
In fact, we can prove the following stronger lemma. 

\begin{lemma}
$\check{f}_a(\alpha ,\beta)$ $(\alpha\in\R$, $\beta\in (0,c_a))$ is embedded. 
\end{lemma}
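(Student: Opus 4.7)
The plan is to reduce embeddedness of $\check f_a$ on $\R\times(0,c_a)$ to injectivity, since the immersion property is already in hand: Lemma~\ref{lm:f*imm} shows $\tilde f_a$ is an immersion on $\R\times(0,\pi)$, and $\check f_a$ is the same map transported through the diffeomorphism $\psi$ of Lemma~\ref{Lem:20120715_1}. Suppose $\check f_a(\alpha,\beta)=\check f_a(\alpha',\beta')$. By Remark~\ref{re:height-alpha}, the $x_0$-component of $\check f_a(\alpha,\beta)$ equals $\alpha$, so $\alpha=\alpha'$. It therefore suffices to show that, for each fixed $\alpha$, the planar map
\[
 \beta\longmapsto M(\beta):=\frac{1}{2}\bigl(\sigma(\alpha+\beta)+\sigma(\alpha-\beta)\bigr)
\]
is injective on $(0,c_a)$, where $\sigma$ denotes the $x_1x_2$-projection of $\tilde\gamma_a$.

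The key geometric object is the chord $C_\beta:=[\sigma(\alpha-\beta),\sigma(\alpha+\beta)]$, whose midpoint is $M(\beta)$. By Lemma~\ref{lm:convex}, $\sigma$ is strictly convex and simple; because $\sigma$ is obtained by reparametrizing the closed curve $\gamma_a$ by its $x_0$-coordinate, $\sigma$ has period $2c_a=\tau(2\pi)$ and is injective on one period. My plan is to prove the stronger statement that the chords $\{C_\beta\}_{\beta\in(0,c_a)}$ are pairwise disjoint; then $M(\beta_j)\in C_{\beta_j}$ will force $M(\beta_1)\neq M(\beta_2)$ whenever $\beta_1\neq \beta_2$.

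Fix $0<\beta_1<\beta_2<c_a$. The parameters $\alpha-\beta_2<\alpha-\beta_1<\alpha+\beta_1<\alpha+\beta_2$ lie in an interval of length $2\beta_2<2c_a$, one full period of $\sigma$, so their four $\sigma$-images are pairwise distinct and appear on $\sigma$ in this cyclic order. The endpoints of $C_{\beta_2}$ are the outer two of these four points and those of $C_{\beta_1}$ are the inner two, so the pairs of endpoints do not interleave on $\sigma$. The main step is then a standard planar-convexity argument: strict convexity of $\sigma$ forces the secant line through $\sigma(\alpha-\beta_2)$ and $\sigma(\alpha+\beta_2)$ to meet $\sigma$ only at those two points, so the arc containing $\sigma(\alpha\pm\beta_1)$ lies strictly in one of the two open half-planes bounded by this line. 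The segment $C_{\beta_1}$, joining two points that lie strictly in that open half-plane, is then itself contained in that open half-plane and hence disjoint from $C_{\beta_2}$. This chord-disjointness is the only substantive obstacle in the argument; once strict convexity of $\sigma$ from Lemma~\ref{lm:convex} is invoked, injectivity of $M$---and therefore embeddedness of $\check f_a$ on $\R\times(0,c_a)$---is immediate.
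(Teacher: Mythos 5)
Your argument is correct and follows essentially the same route as the paper's: first use the fact that the $x_0$-component of $\check{f}_a(\alpha,\beta)$ equals $\alpha$ (Remark~\ref{re:height-alpha}) to force $\alpha=\alpha'$, then invoke the convexity of the projected fold curve (Lemma~\ref{lm:convex}) to conclude that the nested chords determined by $\beta$ and $\beta'$ cannot share a midpoint. The paper compresses that last step into ``a contradiction by the convexity,'' whereas you spell it out by showing the nested chords are in fact disjoint (the inner chord lies strictly in the open half-plane cut off by the outer secant); this is the same idea carried out in slightly more detail.
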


\begin{proof}
Suppose that $\check{f}_a(\alpha ,\beta)=\check{f}_a(\alpha' ,\beta')$, where 
$\alpha, \alpha'\in\R$ and $\beta,\beta'\in (0,c_a)$. 
Then by Remark~\ref{re:height-alpha}, we have $\alpha=\alpha'$. 
Suppose now that $\beta<\beta'$. 
Let $\pi_0:\R^3_1\ni (x_0,x_1,x_2)\mapsto (x_1,x_2)\in\R^2$ be the projection. 
By Lemma~\ref{lm:convex}, $\pi_0\circ\tilde{\gamma}_a(\tau)$ is a closed convex curve. 
Since $\pi_0\circ\gamma_a(s)$ is $2\pi$-periodic, we see that 
$\pi_0\circ\tilde{\gamma}_a(\tau)$ is $2c_a$-periodic. 
Since $0<\beta<\beta'<c_a$, 
$$
\pi_0\circ\tilde{\gamma}_a(\alpha-\beta'),\quad
\pi_0\circ\tilde{\gamma}_a(\alpha-\beta),\quad
\pi_0\circ\tilde{\gamma}_a(\alpha),\quad
\pi_0\circ\tilde{\gamma}_a(\alpha+\beta),\quad
\pi_0\circ\tilde{\gamma}_a(\alpha+\beta')
$$
lie on the curve $\pi_0\circ\tilde{\gamma}_a(\tau)$ in this order. 
The assumption that $\check{f}_a(\alpha ,\beta)=\check{f}_a(\alpha ,\beta')$ implies that 
the midpoint of $\pi_0\circ\tilde{\gamma}_a(\alpha-\beta')$ and 
$\pi_0\circ\tilde{\gamma}_a(\alpha+\beta')$ is equal to the midpoint of 
$\pi_0\circ\tilde{\gamma}_a(\alpha-\beta)$ and 
$\pi_0\circ\tilde{\gamma}_a(\alpha+\beta)$, which is a contradiction by the convexity of 
$\pi_0\circ\tilde{\gamma}_a(\tau)$. 
So, $\beta \ge \beta'$. In a similar way we can conclude $\beta' \ge \beta$, hence $\beta = \beta'$. This finishes the proof. 
\end{proof}

Hence proving the following lemma completes the proof of Proposition~\ref{pr:key-prop}. 
 
\begin{lemma}
$\Omega^\text{\rm min}_{a}$ is contained in 
the closure $\overline{\Delta\times\mathcal{L}_C^\text{\rm min}}$ 
of $\Delta\times\mathcal{L}_C^\text{\rm min}$. 
\end{lemma}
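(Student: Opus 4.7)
\emph{Plan.} My plan is to handle the $x_0$-coordinate first and then the $(x_1,x_2)$-coordinates using the midpoint structure of $\check{f}_a$.

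For the $x_0$-coordinate, Remark~\ref{re:height-alpha} gives $x_0\circ\check{f}_a(\alpha,\beta)=\alpha\in[0,\tau(\pi/4)]$. Since $\mathcal{L}_C^\text{min}$ is the image of the edge $\beta=\tau(\pi/2)$, $\alpha\in[0,\tau(\pi/4)]$, and is parallel to the $x_0$-axis, $|\mathcal{L}_C^\text{min}|=\tau(\pi/4)$; so the $x_0$-range of $\Omega^\text{min}_a$ matches exactly that of the prism. For the $(x_1,x_2)$-coordinates I will use the midpoint formula
\[
\pi_0\check{f}_a(\alpha,\beta)=\tfrac{1}{2}\bigl(\pi_0\tilde{\gamma}_a(\alpha+\beta)+\pi_0\tilde{\gamma}_a(\alpha-\beta)\bigr),
\]
the convexity of $\pi_0\tilde{\gamma}_a$ from Lemma~\ref{lm:convex}, and the $x_1x_2$-plane projections of the symmetries in Lemma~\ref{lm:phi1-4}, namely the reflections in $\pi_0\mathcal{L}_A$ and $\pi_0\mathcal{L}_B$ and a $\pi/2$-rotation around $O:=\pi_0\mathcal{L}_C^\text{min}$.

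First I would examine the image of $\partial\bigl([0,\tau(\pi/4)]\times[0,\tau(\pi/2)]\bigr)$. At $\alpha=0$ the two points $\pi_0\tilde{\gamma}_a(\pm\beta)$ are mirror images across $\pi_0\mathcal{L}_A$, so their midpoint lies on $\pi_0\mathcal{L}_A$; at $\alpha=\tau(\pi/4)$ the analogous statement for $\pi_0\mathcal{L}_B$ holds; at $\beta=\tau(\pi/2)=c_a/2$ the two points $\pi_0\tilde{\gamma}_a(\alpha\pm c_a/2)$ are the image of $\pi_0\tilde{\gamma}_a(\alpha)$ under the $\pm\pi/2$-rotations around $O$, and their contributions cancel, so the midpoint is constantly $O$; and at $\beta=0^{+}$ the midpoint is $\pi_0\tilde{\gamma}_a(\alpha)$, which parameterizes the singular arc. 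Since the first preparatory lemma gives $\pi_0(\Omega^\text{max}_a)\subset\Delta$ with this singular arc as part of its boundary, the arc lies in $\Delta$. Thus the four edges of the parameter rectangle map onto the boundary of the subregion $\Delta_\text{min}\subset\Delta$ bounded by the two segments from $O$ along $\pi_0\mathcal{L}_A,\pi_0\mathcal{L}_B$ out to the endpoints of the singular arc, closed off by the singular arc itself.

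Next I would show that $\pi_0\check{f}_a$ is a local diffeomorphism on the open rectangle. Using $\tilde{\gamma}_a'(\tau)=(1,-\cos s,-\sin s)$ with $s=s(\tau)$, a short calculation yields
\[
\det\frac{\partial(\pi_0\check{f}_a)}{\partial(\alpha,\beta)}=\tfrac{1}{2}\sin\bigl(s(\alpha+\beta)-s(\alpha-\beta)\bigr),
\]
and the $\pi/2$-periodicity of $\xi_a$ forces the argument of the sine to lie strictly in $(0,\pi)$ on the interior (it equals $\pi$ only on the edge $\beta=\tau(\pi/2)$), so the Jacobian is positive there. The hard part will then be the final topological step: combining this local-diffeomorphism property with the monotone traversal of $\partial\Delta_\text{min}$ on the boundary of the rectangle (with the edge $\beta=\tau(\pi/2)$ collapsed to the single point $O$), a standard degree argument will force $\pi_0\check{f}_a$ to be a diffeomorphism onto $\Delta_\text{min}$. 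This yields $\pi_0(\Omega^\text{min}_a)=\Delta_\text{min}\subset\Delta$, which together with the $x_0$-bound gives $\Omega^\text{min}_a\subset\overline{\Delta\times\mathcal{L}_C^\text{min}}$.
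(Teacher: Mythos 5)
Your proof is correct in outline but takes a genuinely different route from the paper's. The paper's proof is a two-line monotonicity argument: it computes $\partial\check{f}_a/\partial\alpha$ and $\partial\check{f}_a/\partial\beta$ in coordinates and reads off that, for $0<u<\pi/4$ and $0<v<\pi/2$, the $x_1$- and $x_2$-components of these vectors have fixed signs; consequently each of the linear functionals $x_1$, $x_1+x_2$, $x_2-x_1$ whose level sets carry the three sides of $\Delta$ is monotone in $\alpha$ and in $\beta$ separately on the parameter rectangle (for instance $x_1\circ\check f_a$ is increasing in $\beta$ and $(x_1+x_2)\circ\check f_a$ is decreasing in both variables), so each is extremized on an edge or corner of the rectangle, i.e.\ on $\partial\Omega_a^{\text{min}}$, which is already known to lie in $\overline{\Delta\times\mathcal{L}_C^{\text{min}}}$; combined with $x_0\circ\check f_a=\alpha\in[0,\tau(\pi/4)]$ this gives the containment. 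You instead compute the Jacobian of $\pi_0\circ\check{f}_a$, analyze the image of the boundary of the rectangle, and invoke degree theory. That route works and, as a by-product, would re-prove the injectivity already established in the preceding lemma; but the step you defer as ``the hard part'' --- that $\pi_0\circ\check{f}_a$ is a diffeomorphism onto $\Delta_{\text{min}}$ --- is more than the lemma needs and is exactly where care is required: a degree argument of that strength presupposes that the boundary image is a simple closed curve traversed once, so you would still have to check, e.g., that $\pi_0\mathcal{L}_B^{\text{min}}$ does not re-cross the singular arc. For the containment actually asserted here a weaker and genuinely routine degree statement suffices: the image of $\partial([0,\tau(\pi/4)]\times[0,\tau(\pi/2)])$ lies in the convex set $\overline{\Delta}$, hence has winding number zero about every point outside $\overline{\Delta}$, and the positivity of your Jacobian on the interior then forbids any preimage of such a point. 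I would either finish with that weaker statement or switch to the paper's sign argument, which avoids topology altogether.
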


\begin{proof}
Direct computations show that 
$$
\frac{\partial\check{f}_a}{\partial\alpha}
= (1,-2\cos u\cos v, -2\sin u\cos v),\qquad
\frac{\partial\check{f}_a}{\partial\beta}
= (1,2\sin u\sin v, -2\cos u\sin v).
$$
Because $u\in (0,\pi/4)$, $v\in (0,\pi/2)$, we have 
$$
-2\cos u\cos v <0, \quad
-2\sin u\cos v <0, \quad
2\sin u\sin v >0, \quad
-2\cos u\sin v <0.
$$  
Since the boundary $\partial\Omega^\text{min}_{a}$ of $\Omega^\text{min}_{a}$ 
consists of the three straight line segments
\begin{align*}
\mathcal{L}_A^\text{min}&:=\{\check{f}_a(0,\beta)\in\R^3_1\,;\,0<\beta\le c_a/2\},\\
\mathcal{L}_B^\text{min}&:=\{\check{f}_a(\tau(\pi/4),\beta)\in\R^3_1\,;\,0<\beta\le c_a/2\},\\
\mathcal{L}_C^\text{min}&:=\{\check{f}_a(\alpha,c_a/2)\in\R^3_1\,;\,0\le \alpha\le\tau (\pi/4)\},
\end{align*}
and the singular curve $\tilde{\gamma}_a(\tau)$ ($0\le \tau \le \tau(\pi/4)$), 
and all of them are contained in $\overline{\Delta\times\mathcal{L}_C^\text{min}}$, the claim follows. 
\end{proof}


\end{document}